\def\strokedint{\fint}
\newcommand\weakto{{\rightharpoonup}}
\numberwithin{equation}{section}
\newtheorem{theorem}{Theorem}[section]
\newtheorem{remark}[theorem]{Remark}
\newtheorem{lemma}[theorem]{Lemma}
\newcommand\R{\mathbb{R}}
\newcommand\N{\mathbb{N}}
\newcommand\rank{\mathrm{rank}}
\newcommand\qc{\mathrm{qc}}
\newcommand\pc{\mathrm{pc}}
\newcommand\nem{\mathrm{ne}}
\newcommand\Id{\mathrm{Id}}
\newcommand\cof{\mathop{\mathrm{cof}}}
\newcommand\eps{\varepsilon}
\newcommand\calL{\mathcal{L}}
\newcommand\Emitg{E}
\newcommand\Emitgrel{\Emitg^*}
\newcommand\Enog{{I}}
\newcommand\Enogrel{\Enog^*}
\def\xone{a_0}
\begin{document}
\begin{center}
{ \LARGE
On the theory of relaxation in nonlinear elasticity
with constraints on the determinant
 \\[5mm]}
{\today}\\[5mm]
Sergio Conti$^{1}$ and Georg Dolzmann$^{2}$\\[2mm]
{\em $^1$ Institut f\"ur Angewandte Mathematik,
Universit\"at Bonn\\ 53115 Bonn, Germany }\\
{\em $^{2}$ Fakult\"at f\"ur Mathematik, Universit\"at Regensburg,\\
 93040
  Regensburg, Germany}
\\[3mm]
\begin{minipage}[c]{0.8\textwidth}\small
 We consider vectorial variational problems in nonlinear elasticity of the
 form $\Enog[u]=\int W(Du)dx$, where 
$W$ is continuous on matrices with positive determinant
 and diverges  to infinity along sequences of matrices whose determinant is positive and 
tends to zero.
 We show that, under suitable growth assumptions, the 
 functional $\int W^\qc(Du)dx$ 
is an upper bound on the relaxation of
$\Enog$, and coincides with the relaxation if the quasiconvex envelope $W^\qc$ of
$W$ is polyconvex and has $p$-growth from below with $p\ge n$. This includes several
physically 
relevant examples. We also show how a constraint of incompressibility can be
incorporated in our results.
\end{minipage}
\end{center}
\section{Introduction}

Starting with the work of Morrey \cite{Morrey1952}, the concept of
quasiconvexity has been 
fundamental in the study of the relaxation of vectorial problems in the
calculus of variations, 
see for example \cite{Dacorogna1989,MuellerLectureNotes,RoubicekBook1997}. In particular, 
 if $W:\R^{n\times 
  m}\to\R$ is continuous and has $p$-growth then the relaxation of the
functional $\Enog: W^{1,p}(\Omega;\R^m)\to\R$,
\begin{equation}\label{eqintW}
  \Enog[u]=\int_\Omega W(Du) dx
\end{equation}
is given by
\begin{equation}\label{eqintWqc}
  \Enogrel[u]=\int_\Omega W^\qc(Du) dx\,,
\end{equation}
where $W^\qc$ is the quasiconvex envelope of $W$, see
 \cite{Morrey1952,AcerbiFusco84,Dacorogna1989}. Here $\Omega\subset\R^n$ is a bounded Lipschitz set,
$W^\qc$ is defined by~(\ref{eqdefwqcdetp}) below.
The computation of $W^\qc$ is in general difficult, but it was performed in a
number of  special cases  with high symmetry, see for example
\cite{DeSimoneDolzmannARMA2002,ContiTheil2005,Silhavy2007,ContiDolzmanntwowell}. 
The key strategy is to construct specific test functions  using
lamination and rank-one convexity to prove an upper bound, and then to show
that the resulting expression is polyconvex, which delivers the lower bound.

One of the main applications of the vectorial calculus of variations is
nonlinear elasticity with $m=n$. The physical constraint of non-interpenetration of
matter leads naturally to the requirement of injectivity of the
deformation $u$, a complex nonlocal condition which is often replaced by the
simpler condition that $\det Du>0$ almost everywhere. Correspondingly, one 
assumes that the energy density $W$ diverges when the determinant of the
argument is positive and tends to zero. Such energy 
densities are not continuous on $\R^{n\times n}$ and do not have $p$-growth
from above for any $p$, hence the general relaxation theorem is not
applicable. 
Starting with the work of Ball
 \cite{Ball1977} a large body of work developed with the aim of proving
 existence of minimizers for variational problems with the constraint $\det
 Du>0$, mainly building upon the concept of polyconvexity. In contrast,
to the best of our knowledge, there is no physically-relevant functional
incorporating the nonlinear constraint $\det Du>0$ for which a nontrivial
relaxation is known. 

The significance of the constraint $\det Du>0$ depends dramatically on the
growth 
exponent $p$ of the energy, and two main regimes emerge. 
If $p\ge n$ the deformation $u$  is necessarily continuous. This follows from
the Sobolev embedding theorem  for
$p>n$ and in the case $p=n$ from the work of
Vodop$'$janov and Gol$'$d{\v{s}}te{\u\i}n 
\cite{VodopjanovGoldstein1976}, see also \cite{Sverak1988}. Further, if a
sequence $u_j$ converges weakly to some $u$ in $W^{1,n}$ and $\det Du_j>0$
almost everywhere, then necessarily $\det Du\ge 0$ almost everywhere. 
For $p>n$ this follows directly from the properties of the determinant, 
which in particular give  $\det Du_j\weakto \det Du$ in $L^1$
\cite[Cor.  6.2.2]{Ball1977}, and still
holds for $p=n$, see \cite[Th. 4.1(ii)]{BallMurat1984}. A related treatment
with the distributional determinant instead of the pointwise determinant is
still possible if $p>n-1$ and a generalized invertibility condition is used
instead of $\det Du>0$ \cite{MuellerSpector1995},
see also \cite{MuellerSivaloganathanSpector1999,ContiDeLellis2003,FonsecaLeoniMaly2005,HenaoMoracorral2010} for subsequent developments. 

The case $p<n$ is substantially different, since the deformations are
not continuous and can develop holes, as was first shown by Ball
\cite{Ball1982}. 
Correspondingly, the constraint of having positive determinant
does not pass to the limit and the relaxed problem has a substantially
different structure, see for example
\cite{BallMurat1984,KoumatosRindlerWiedemann1,KoumatosRindlerWiedemann2}
for further developments.  Relaxation in a related situation in 
which the constraints are lost after rank-one convexification was 
discussed in \cite{Benbelgacem2000}. We shall not discuss these  cases 
further here. 

The proof of the classical relaxation theorem for continuous integrands is
based on a truncation procedure in 
which one replaces  a sequence $u_j\in W^{1,p}$ which converges weakly to an
affine function by a sequence with the same weak limit, the same energy, and
which is affine on the boundary. It is currently unknown if a similar
construction can be done if a constraint on the determinant has to be
preserved. In two spatial dimensions and under the
assumption that $u$ is bilipschitz 
a solution was given in 
\cite{BenesovaKruzik}
 building upon an involved 
 construction of bilipschitz extensions by Daneri and Pratelli 
\cite{DaneriPratelli1}. 
The approximation of Hölder-continuous homeomorphisms was obtained in
\cite{BellidoMoracorral2011}. The situation with Sobolev  functions is
substantially more complex, and was up to now only solved in the case
 $p=n=2$, see \cite{IwaniecKovalevOnninen2011}. 
A related problematic arises in the relaxation of problems with mixed
growth, see for example  \cite{Kristensenlsc}.

In this paper we prove that the functional (\ref{eqintWqc}) gives an upper
bound on the relaxation of (\ref{eqintW}) for a class of energy densities
which are infinite on matrices $F$ with $\det F\le 0$ and have $p$-growth for
some $p\ge 1$ on the set $\{\det F>0\}$, see Theorem \ref{theoorientation} below.
If $W^\qc$
is polyconvex and $p\ge n$ then
(\ref{eqintWqc}) coincides with the relaxation of  
 (\ref{eqintW}), see Theorem \ref{theocorollaryorientat} below.  The 
growth assumption can be somewhat relaxed if a suitable integrability of the cofactor is assumed, see Remark \ref{remarkgener}.  Since all known explicit quasiconvex
 envelopes  $W^\qc$ are 
 quasiconvex, our result fully characterizes the relaxation  in all cases
 where 
 $W^\qc$ has been computed and the pointwise determinant constraint survives the
 relaxation. 
We also show that our results can be generalized to problems where $\det Du=1$
almost everywhere, see Section \ref{subsecapplicincompr} below.

Notation: We denote 
by  $\R^{n\times n}_+=\{F\in \R^{n\times n}: \det F>0\}$ the set of orientation-preserving matrices,   by $B(r,x_0)$ the open ball in $\R^n$ and set
$B_r=B(r,0)$. Finally $\strokedint_E f\, dx$ denotes the mean value of $f$ on $E$.

We define 
the quasiconvex envelope $W^\qc:\R^{n\times n}\to[0,\infty]$ of a Borel-measurable function 
$W:\R^{n\times n}\to[0,\infty]$ by 
\begin{equation}\label{eqdefwqcdetp}
  W^\qc(F)=\inf\{\strokedint_{B_1} W(D\varphi)\, dx: \varphi\in
  W^{1,\infty}(B_1;\R^n), \varphi(x)=Fx \text{ for } x\in \partial B_1 \}\,.
\end{equation}
This is not necessarily the same as the largest finite-valued quasiconvex function below $W$, see Remark \ref{remarkwqc} below.

We say that a function $f:\R^{n\times n}\to [0,\infty]$ is polyconvex if
there is a lower semicontinuous and convex function $g:\R^{\tau(n)}\to[0,\infty]$ such
that $f(F)=g(M(F))$, where $M(F)$ denotes all minors of $F$
 \cite{Ball1977,Dacorogna1989,MuellerLectureNotes}. 
In particular, if $n=2$ then
$M(F)=(F,\det F)$, if $n=3$ then $M(F)=(F,\cof F, \det F)$.
The requirement of lower semicontinuity of $g$ is often not included 
in the definition but instead enforced through appropriate growth conditions. 
For  finite-valued functions this makes no difference, but for 
extended-valued we believe the present one to be the definition more naturally
related to lower semicontinuity, as the example 
$g(\det F)=0$ if $\det F>0$, $g(\det F)=\infty$ otherwise, with the 
sequence $u(x)=x/j$ shows, see also the discussion in  \cite{Mielke2005}.

\section{Main results}

\subsection{Relaxation of orientation-preserving models}
\label{secapplicaorientpres}
The main result of the paper is a relaxation theorem
for coercive variational problems 
in nonlinear elasticity incorporating a constraint on the determinant, see Theorem \ref{theocorollaryorientat} below. 
Our key new contribution is a construction for the upper bound which preserves the positive-determinant
constraint and leads to the following statement.
\begin{theorem}\label{theoorientation}
Let $W\in C^0(\R^{n\times n}_+,[0,\infty))$ obey
\begin{equation}\label{eqgrowthwpd}
  \frac1c |F|^p +\frac1c\theta(\det F)-c\le W(F)\le c|F|^p+c\theta(\det F)+c
\end{equation}
for some $p\ge 1$ and $c>0$,  where 
$\theta:(0,\infty)\to[0,\infty)$ is convex and satisfies
\begin{equation}\label{eqasstheta}
  \theta(xy)\le c(1+\theta(x))(1+\theta(y))  \text{ for all
    $x,y\in(0,\infty)$}, 
\end{equation}
and extend $W$ to $\R^{n\times n}$ by $W(F)=\infty$ if $\det F\le 0$.
Let $W^\qc$ be defined
as in (\ref{eqdefwqcdetp}), $\Omega\subset\R^n$ open, bounded and 
Lipschitz. For any $u\in W^{1,p}(\Omega;\R^n)$ there is a sequence $u_j\in
W^{1,p}(\Omega;\R^n)$
which converges weakly to $u$ such that $u_j-u\in W^{1,p}_0$ for all $j$ and 
\begin{equation*}
  \limsup_{j\to\infty} \int_\Omega W(Du_j)dx\le \int_\Omega W^\qc(Du)dx\,.
\end{equation*}
\end{theorem}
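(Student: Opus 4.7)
My plan is to follow the standard three-step scheme—affine data, piecewise affine data, general data—while guaranteeing at each stage that the constructed sequence stays inside the set $\{\det Du_j>0\}$ where $W$ is finite. A preliminary observation I will use throughout is that $W^\qc(F)<\infty$ already forces $\det F>0$: any admissible $\varphi$ with $\varphi(x)=Fx$ on $\partial B_1$ and $W(D\varphi)\in L^1(B_1)$ must satisfy $\det D\varphi>0$ a.e., and the null-Lagrangian identity $\int_{B_1}\det D\varphi\,dx=|B_1|\det F$ then yields $\det F>0$. In particular one may assume $\int_\Omega W^\qc(Du)\,dx<\infty$, since the statement is otherwise vacuous.

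For affine data $u(x)=Fx$ with $W^\qc(F)<\infty$, I would fix $\eps>0$, pick an admissible $\varphi$ with $\strokedint_{B_1}W(D\varphi)\,dx\le W^\qc(F)+\eps$, and paste translated-rescaled copies of $\varphi$ on a disjoint Vitali family of balls $\overline{B(r_j^i,x_j^i)}\subset \Omega_{1/j}:=\{x\in\Omega:\mathrm{dist}(x,\partial\Omega)>1/j\}$ covering $\Omega_{1/j}$ up to a null set, while setting $u_j=u$ on the boundary layer $\Omega\setminus\Omega_{1/j}$:
\begin{equation*}
u_j(x)=Fx_j^i+r_j^i\,\varphi\bigl((x-x_j^i)/r_j^i\bigr)\text{ on }B(r_j^i,x_j^i),\qquad u_j=u\text{ elsewhere}.
\end{equation*}
Scale invariance of $\int W$ together with $|\Omega\setminus\Omega_{1/j}|\to 0$ and $W(F)<\infty$ give $\limsup_j\int_\Omega W(Du_j)\,dx\le |\Omega|(W^\qc(F)+\eps)$; $\|u_j-u\|_\infty\to 0$ delivers weak convergence; and because $u_j$ agrees with $u$ on a neighborhood of $\partial\Omega$ we obtain $u_j-u\in W^{1,p}_0(\Omega)$. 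For piecewise affine $u$ with $\int W^\qc(Du)<\infty$, I would run this construction on each region of affinity independently, the boundary compatibilities across interfaces being automatic since each local construction agrees with the underlying affine map outside the interior Vitali balls of that region.

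The genuinely delicate step is the density argument connecting general $u$ to piecewise affine data: I must produce piecewise affine $v_k$ with $v_k-u\in W^{1,p}_0(\Omega)$, $v_k\to u$ strongly in $W^{1,p}$, and $\int_\Omega W^\qc(Dv_k)\,dx\to\int_\Omega W^\qc(Du)\,dx$. Two features make this hard: $W^\qc$ need not be continuous, and the $\theta(\det Dv_k)$ contribution is not controlled by $L^p$ convergence of gradients alone. This is exactly where the multiplicative bound (\ref{eqasstheta}) on $\theta$ will be decisive, since it absorbs small multiplicative perturbations of the determinant—of the kind produced by mollification or by linear interpolation on a fine triangulation—into bounded factors times $1+\theta(\det Du)$, letting a standard piecewise affine approximation scheme carry through. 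Once such $v_k$ are available, the piecewise affine case applies to each of them and a diagonal argument in $k$ and $j$ extracts the required sequence $u_j$.
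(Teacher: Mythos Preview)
Your Steps 1 and 2 (affine and piecewise affine data) are standard and fine. The genuine gap is Step 3, and it is not a matter of care but of approach: producing piecewise affine $v_k$ with $v_k\to u$ in $W^{1,p}$ \emph{and} $\det Dv_k>0$ a.e.\ is precisely the open problem the paper is written to circumvent. Standard triangulation or mollification-plus-interpolation gives an \emph{additive} perturbation of $Du$, and an additive perturbation of the gradient does not translate into a multiplicative perturbation of the determinant; near points where $\det Du$ is small (which can happen on sets of positive measure, since only $\theta(\det Du)\in L^1$ is assumed) the approximant can and typically will have $\det Dv_k\le 0$ on simplices of positive measure, making $W^\qc(Dv_k)=\infty$ there. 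The hypothesis~(\ref{eqasstheta}) gives you nothing in that situation: it controls $\theta(xy)$ for $x,y>0$, not $\theta$ of an additively perturbed determinant that may change sign. As the introduction of the paper notes, orientation-preserving piecewise affine approximation of Sobolev maps is only known in very special cases (essentially $p=n=2$ or bilipschitz in the plane), so your density step cannot be completed by ``a standard piecewise affine approximation scheme.''

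The paper's route is fundamentally different and avoids piecewise affine approximation altogether. Around a Lebesgue point of $Du$ with value $F$, instead of replacing $u$ by an affine map and then inserting the oscillating test function $\varphi_\eta$, one sets $z=u\circ(F^{-1}\varphi_\eta)$ directly. The chain rule gives $\det Dz=(\det Du)\circ v\cdot\det Dv$, a genuinely \emph{multiplicative} perturbation, so $\det Dz>0$ is automatic and~(\ref{eqasstheta}) is exactly what is needed to control $\theta(\det Dz)$. The price is that $u\circ v$ is only $W^{1,1}$ a priori and the change-of-variables produces a factor $1/\det D\varphi_\eta$ which is merely $L^1$; the paper handles this by an averaging/translation trick (Lemma~\ref{lemmachoicex1}) to choose a good center for the ball, and then covers $\Omega$ by such balls via Vitali (Lemma~\ref{lemmarecoverydp}). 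This is the new idea you are missing.
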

\begin{proof}
If $\det Du>0$ almost everywhere the statement follows from Lemma \ref{lemmaconstr2} in Section \ref{secorientationpres} below. 
From the definition one immediately obtains $W^\qc(F)=\infty$ if $\det F\le 0$, therefore in the 
other case a constant sequence will do.
\end{proof}
If the coercivity exponent $p$ is at least $n$,
then the determinant is an $L^1$ function and weakly continuous in compact
subsets \cite{Mueller1990}, therefore the constraint on the 
determinant passes to the limit.
Complementing  Theorem~\ref{theoorientation}
with existing compactness and lower semicontinuity  results, 
based on the concept of   polyconvexity \cite{Ball1977},
leads to a full relaxation and existence statement.

\begin{theorem}\label{theocorollaryorientat}
Let $W\in C^0(\R^{n\times n}_+,[0,\infty))$ obey
(\ref{eqgrowthwpd}--\ref{eqasstheta})
with   $p\ge n$ and
\begin{equation*}
\lim_{t\to0}\theta(t)=\infty\,,
\end{equation*}
and extend $W$ by $W(F)=\infty$ to the set  $\{\det F\le 0\}$.
Let  $\Omega\subset\R^n$ be an open, bounded,
Lipschitz, connected set, 
\begin{align*}
 X=\{u\in W^{1,p}(\Omega;\R^n): \det Du>0 \text{ a.e. }\}\,,
\end{align*}
and $f\in C^0(\R^n)$ with $|f(t)|\le c (1+|t|^q)$ for some $q\in[0,p)$.
We define $W^\qc$ as in (\ref{eqdefwqcdetp})
and the functionals 
$\Emitg, \Emitgrel:L^1(\Omega;\R^n)\to \R\cup\{\infty\}$ by
\begin{equation*}
\Emitg[u]=\int_\Omega \left(W(Du) + f(u)\right) dx \text{ and }
\Emitgrel[u]=\int_\Omega \left(W^\qc(Du) + f(u)\right) dx
\end{equation*}
for $u\in X$, and $\Emitg=\Emitgrel=\infty$ on $L^1\setminus X$.
Finally assume that $W^\qc = W^\pc$. 
Then the following assertions hold:
\begin{enumerate}
\item 
$\Emitgrel$ is the relaxation of $\Emitg$ with respect to strong $L^1$ convergence, in the sense that
\begin{equation*}
\Emitgrel[u]=\inf\{\liminf_{j\to\infty} \Emitg[u_j]: u_j \in L^1(\Omega;\R^n), u_j\to u \text{ in } L^1\}\,.
\end{equation*}
\item The same holds if,
for any given relatively open set 
$\Gamma_D\subset\partial\Omega$  and $u_0\in X$,
the functionals $\Emitg$ and $\Emitgrel$ are set to be $\infty$ outside
  \begin{equation*}
\widetilde X=X\cap \{u=u_0\text{ on }\Gamma_D\}\,.
  \end{equation*}
\item The functional
$\Emitgrel$ has a minimizer in the space $\widetilde X$. 
\end{enumerate}
\end{theorem}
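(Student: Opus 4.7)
Theorem~\ref{theoorientation} already supplies the upper bound, so for (i) the task is the matching liminf inequality; (ii) follows by checking that the recovery sequence of Theorem~\ref{theoorientation} respects the Dirichlet datum and that traces pass to the weak limit, while (iii) is a standard direct-method application once coercivity and $L^1$-lower semicontinuity of $\Emitgrel$ on $\widetilde X$ have been established.

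\textbf{Upper bound.} For $u\in X$ (otherwise $\Emitgrel[u]=\infty$ and there is nothing to prove), Theorem~\ref{theoorientation} produces $u_j\in W^{1,p}(\Omega;\R^n)$ with $u_j-u\in W^{1,p}_0$, $u_j\weakto u$, and $\limsup_j\int_\Omega W(Du_j)\,dx\le \int_\Omega W^\qc(Du)\,dx$; since $W\equiv\infty$ on $\{\det\le 0\}$, this forces $u_j\in X$ for $j$ large. In the Dirichlet setting of (ii), $u_j-u\in W^{1,p}_0$ together with $u|_{\Gamma_D}=u_0$ automatically gives $u_j|_{\Gamma_D}=u_0$. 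The lower-order term is handled by Rellich--Kondrachov combined with the $q$-growth of $f$ and Vitali's convergence theorem.

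\textbf{Lower bound.} Let $u_j\to u$ in $L^1$ with $\liminf\Emitg[u_j]<\infty$. The coercive bound in~(\ref{eqgrowthwpd}), the $q$-growth of $f$, and the $L^1$-boundedness of $u_j$ yield a uniform $W^{1,p}$-bound, so along a subsequence $u_j\weakto u$ in $W^{1,p}$; the hypothesis $p\ge n$ makes every minor of $Du_j$ converge weakly in some $L^r$, $r>1$, to the corresponding minor of $Du$. Extending $\theta$ by $+\infty$ on $(-\infty,0]$ yields a convex, lower semicontinuous function on $\R$, the lower semicontinuity at $0$ being exactly the content of $\theta(t)\to\infty$ as $t\to 0^+$. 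Weak lower semicontinuity of convex integrals gives
\begin{equation*}
\int_\Omega \theta(\det Du)\,dx\le \liminf_j \int_\Omega \theta(\det Du_j)\,dx<\infty,
\end{equation*}
which forces $\det Du>0$ a.e., hence $u\in X$. Since $W^\qc=W^\pc$, we may write $W^\qc(F)=g(M(F))$ with $g$ convex and lower semicontinuous, and the same principle yields
\begin{equation*}
\int_\Omega W^\qc(Du)\,dx\le \liminf_j \int_\Omega W^\qc(Du_j)\,dx\le \liminf_j \int_\Omega W(Du_j)\,dx.
\end{equation*}
Rellich and Vitali take care of the $f$-term, and in case (ii) trace continuity under weak $W^{1,p}$-convergence secures $u|_{\Gamma_D}=u_0$.

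\textbf{Existence and main obstacle.} Polyconvexity of $|F|^p+\theta(\det F)$ shows it is a quasiconvex minorant of $W$, hence also a minorant of $W^\qc$; combined with a Poincar\'e inequality based on the Dirichlet datum this gives $W^{1,p}$-coercivity of $\Emitgrel$ on the nonempty set $\widetilde X\supset\{u_0\}$. A minimizing sequence therefore converges weakly along a subsequence, and the lower-bound argument above delivers a minimizer. The real obstacle in this scheme is propagating the pointwise constraint $\det Du_j>0$ to $\det Du>0$ in the weak limit; this is precisely what the blow-up of $\theta$ at $0$, combined with the weak continuity of the determinant available for $p\ge n$, is engineered to enforce.
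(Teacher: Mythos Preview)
Your overall strategy is the paper's own: Theorem~\ref{theoorientation} for the upper bound, weak continuity of the minors combined with polyconvexity of $W^\qc$ for the lower bound, and the direct method for existence. The execution, however, has one genuine gap at the critical exponent $p=n$.

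You assert that ``the hypothesis $p\ge n$ makes every minor of $Du_j$ converge weakly in some $L^r$, $r>1$.'' For $p>n$ this is correct, but for $p=n$ the top-order minor $\det Du_j$ is a priori only in $L^1$, and weak $L^1$ convergence can fail by concentration. Nothing in the hypotheses forces $\theta$ to be superlinear at infinity (e.g.\ $\theta(t)=1/t$ satisfies all the assumptions), so the bound on $\int_\Omega\theta(\det Du_j)\,dx$ does not by itself give equi-integrability of $\det Du_j$. Both of your subsequent steps---the inequality $\int\theta(\det Du)\le\liminf\int\theta(\det Du_j)$ that yields $\det Du>0$, and the lower semicontinuity of $\int g(M(Du))$---rely on weak $L^1$ convergence of the determinant, so this is not a cosmetic issue.

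The paper closes the gap by invoking M\"uller's higher-integrability theorem \cite{Mueller1990}: for $u\in W^{1,n}$ with $\det Du\ge 0$ one has $\det Du\in L\log L$ locally, which upgrades distributional convergence of the determinants to weak $L^1$ convergence on every compact $K\subset\subset\Omega$. The sign constraint $\det Du_j>0$ is essential here---it is precisely the extra structure that makes the borderline case tractable. The paper then proves the lower bound on each compact $K$ and takes the supremum over $K$. You should do the same; once that is in place, the remainder of your argument goes through.
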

\begin{proof}
  By Theorem \ref{theoorientation} for any $u\in X$ there is $(u_j)_{j\in \N}\subset X$ with $u_j=u$ on $\partial\Omega$, $u_j\to u$ for $j\to\infty$ in $L^p$,
  and $\limsup_{j\to\infty} \Emitg[u_j]\le \Emitgrel[u]$. This proves the upper bound in both cases.

  Let now $(u_j)_{j\in \N}$ be a sequence in $X$ with $\Emitgrel[u_j]\le C<\infty$ for all
  $j$. From~(\ref{eqgrowthwpd}) one immediately obtains $W^\qc(F)\ge |F|^p/c-c$.
  The growth condition on $f$ ensures, since $q<p$, that $\int_\Omega W^\qc(Du)dx\le
  C'<\infty$ for all $j$.
  Taking a subsequence we can assume $u_j\weakto u$ in $W^{1,p}$ for some
  $u\in W^{1,p}(\Omega;\R^n)$.
  By the continuity of the trace, if $u_j=u_0$ on $\Gamma_D$ for all $j$ then $u=u_0$ on $\Gamma_D$.

  In order to show that  $u\in X$ it only remains to prove the condition on the determinant. 
By \cite{Mueller1990} we have $\det Du_j\weakto \det Du$ in $L^1(K)$ for all $K\subset\subset\Omega$. Since $\theta$ diverges at 0  the  weak limit is positive almost everywhere and  $u\in X$.

     To prove lower semicontinuity we
     let $W^\qc(F)=g(M(F))$, with $g$ convex and lower semicontinuous, and 
     fix a compact set  $K\subset\Omega$. As discussed above,
     we have $\det Du^j\weakto\det Du$ in 
     $L^1(K)$ for all $K\subset\subset\Omega$. The other minors converge also weakly in $L^1$, since
     $u_j\weakto u$ in $W^{1,p}$ with $p>n-1$. Therefore $M(Du^j)\weakto M(Du)$ in $L^1(K)$ and using
     Jensen's inequality and the
     convexity of $g$ we obtain
     \begin{alignat*}1
       \int_K W^\qc(Du)\, dx&=\int_K g(M(Du))\, dx\le\liminf_{j\to\infty} \int_K g(M(Du_j))\, dx 
\\&       \le \liminf_{j\to\infty} \int_\Omega W^\qc(Du_j)\, dx\,.
     \end{alignat*}
     The term $\int_\Omega f(u) dx$ is continuous.
     Taking the supremum over all compact subsets $K\subset\Omega$  gives $\Emitgrel[u]\le \liminf \Emitgrel[u_j]\le \liminf \Emitg[u_j]$ and
  concludes the proof.
\end{proof}

\begin{remark}\label{remarkgener}
  \begin{enumerate}
\item  The result of Theorem \ref{theoorientation} can be extended  to functions  $W$  which obey 
\begin{equation}\label{eqwprod1}
    W(FG)\le c (1+W(F))(1+W(G))
\end{equation}
and
\begin{equation}\label{eqwprod2}
  \frac1c |F|-c\le W(F)\le c W^\qc(F)+c
\end{equation}
instead of (\ref{eqgrowthwpd}) and (\ref{eqasstheta}). We discuss  in Appendix \ref{sectmultipl}  the required modifications to the proof.
  \item Using this generalization, one can extend Theorem \ref{theocorollaryorientat} to a situation in which $W$ obeys  (\ref{eqwprod1}), (\ref{eqwprod2}) and the growth condition
    \begin{equation*}
        \frac1c |F|^p +\frac1c|\cof F|^q+\frac1c\theta(\det F)-c\le W(F)\,,
    \end{equation*}
 corresponding to the spaces $\mathcal{A}_{p,q}$ introduced by  Ball \cite{Ball1977}, see also \cite{Sverak1988}. 
\item A different picture arises if one instead uses a constraint on
the pointwise determinant, with the material becoming substantially softer,
see \cite{Ball1982,BallMurat1984} and \cite{KoumatosRindlerWiedemann1,KoumatosRindlerWiedemann2,KoumatosRindlerWiedemann3}.
  \end{enumerate}
\end{remark}

\subsection{Relaxation of incompressible models}
\label{subsecapplicincompr}
We deal with integrands which are defined on the set of volume-preserving
matrices
 $\Sigma=\{F\in \R^{n\times n}: \det F=1\}$
and which have $p$-growth. In this framework, we prove the following result. 
\begin{theorem}\label{theovolume}
Let $W\in C^0(\Sigma,[0,\infty))$ obey
\begin{equation}\label{eqgrowthwincompr}
  \frac1c |F|^p-c\le W(F)\le c|F|^p+c
\end{equation}
for some $p\ge 1$ and $c>0$, and set $W(F)=\infty$ if $\det F\ne 1$. Let $W^\qc$ be defined as in (\ref{eqdefwqcdetp}),  $\Omega\subset\R^n$ open, bounded and 
Lipschitz. For any $u\in W^{1,p}(\Omega;\R^n)$ there is a sequence $u_j\in
W^{1,p}(\Omega;\R^n)$
which converges weakly to $u$ such that $u_j-u\in W^{1,p}_0$ and 
\begin{equation*}
  \limsup_{j\to\infty} \int_\Omega W(Du_j)dx\le \int_\Omega W^\qc(Du_j)dx\,.
\end{equation*}
\end{theorem}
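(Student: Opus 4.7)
The plan is to mimic the strategy used for Theorem \ref{theoorientation}, adapting the local construction of Lemma \ref{lemmaconstr2} to the volume-preserving setting.

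First, I would dispose of the case in which $\det Du\ne 1$ on a set of positive measure. For any $F$ with $\det F\ne 1$ and any competitor $\varphi$ in (\ref{eqdefwqcdetp}) with $\strokedint_{B_1}W(D\varphi)\,dx<\infty$, the hypothesis $W=\infty$ off $\Sigma$ forces $\det D\varphi=1$ a.e., while the null-Lagrangian identity $\int_{B_1}\det D\varphi\,dx=\det F\cdot|B_1|$ forces $\det F=1$, a contradiction. Hence $W^\qc(F)=+\infty$ for $F\notin\Sigma$, so if $\det Du\ne 1$ on a set of positive measure then $\int_\Omega W^\qc(Du)\,dx=+\infty$ and the constant sequence $u_j\equiv u$ satisfies the bound trivially. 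One may thus assume $\det Du=1$ a.e.

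Next, for $F\in\Sigma$ and $\eta>0$, I would fix a near-optimal competitor $\varphi_F$ in (\ref{eqdefwqcdetp}) with $\strokedint_{B_1}W(D\varphi_F)\,dx\le W^\qc(F)+\eta$; by the previous observation $\varphi_F$ automatically satisfies $\det D\varphi_F=1$ a.e. This is the useful twist: the local building blocks of the recovery construction are volume preserving for free, so if the Vitali-type patching of Lemma \ref{lemmaconstr2} can be carried out in a way compatible with $\det=1$, the resulting $u_j$ will inherit $\det Du_j=1$ a.e.\ without any additional work, and the energy bound will follow from essentially the same computation as in Theorem \ref{theoorientation}.

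The main obstacle is precisely this patching. In the orientation-preserving setting of Theorem \ref{theoorientation} one can replace $u$ on each cell of the covering by an affine map $\bar F_jx+b_j$ with $\bar F_j=\strokedint Du$ and then insert the near-optimal $\varphi_{\bar F_j}$, because $\det>0$ is an open condition stable under mollification or piecewise-affine approximation. Here $\det=1$ is a closed condition and breaks under these usual tools: the mean $\bar F_j$ of $Du$ over a small ball has no reason to lie in $\Sigma$ even when $\det Du=1$ a.e. I would therefore expect the proof to invoke either a volume-preserving piecewise-affine approximation of $u$ or a Dacorogna--Moser-type correction applied cell by cell, adjusting the determinant back to $1$ at the cost of a $W^{1,p}$-error controlled by $\|\det D\bar u_j-1\|_{L^p}$; this error is absorbed using the uniform continuity of $W$ on compact subsets of $\Sigma$ together with the $p$-growth assumption (\ref{eqgrowthwincompr}). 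Carrying out such a correction while also preserving the outer boundary condition $u_j=u$ on $\partial\Omega$ is the technically delicate step of the argument.
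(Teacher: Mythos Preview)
Your first two paragraphs are fine, but the third misreads the local construction in Lemma~\ref{lemmaconstr2} and therefore misidentifies the ``main obstacle''. That lemma does \emph{not} replace $u$ on each ball by an affine map $\bar F_j x+b_j$ and then insert the near-optimal competitor; such a replacement would indeed run into precisely the difficulties you describe with the closed constraint $\det=1$. The key idea of the paper---spelled out in the ``Strategy of the proof'' subsection---is instead to form the \emph{composition}
\[
z(x)=u\bigl(F^{-1}\varphi_\eta(x-\xone)+\xone\bigr),
\]
so that $Dz=(Du\circ v)\,Dv$ with $v=F^{-1}\varphi_\eta(\cdot-\xone)+\xone$. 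The determinant constraint is then inherited multiplicatively: $\det Dz=(\det Du)\circ v\cdot\det Dv$, and since both factors satisfy the constraint, so does $Dz$. No piecewise-affine approximation of $u$ and no Dacorogna--Moser correction is used anywhere.

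With this in hand the incompressible case is actually \emph{simpler} than the orientation-preserving one, not harder. Since the near-optimal $\varphi_\eta$ has $\det D\varphi_\eta=1$ a.e., Ball's global invertibility theorem \cite[Theorem~2]{Ball1981} makes $v$ a bilipschitz map of the ball onto itself with $\det Dv=1$, and the change-of-variables formula gives $\int_B|Du-F|^p\circ v\,dx=\int_B|Du-F|^p\,dx$ exactly. The error set $\{|Du-F|\circ v>\varepsilon\}$ is therefore controlled directly by the hypothesis $\strokedint_B|Du-F|^p\,dx\le\delta$, and the translation device of Lemma~\ref{lemmachoicex1} is not needed. This is carried out in Lemma~\ref{lemmaconstr1}; the covering argument of Lemma~\ref{lemmarecoverydp} then applies verbatim.
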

\begin{proof}
     The  statement follows from Lemma \ref{lemmarecoveryincompr} below.  
\end{proof}
Also in this case, if  coercivity is sufficient  a full relaxation statement follows.
\begin{theorem}
Let $W\in C^0(\Sigma,[0,\infty))$ obey
(\ref{eqgrowthwincompr}) for some  $p\ge n$, let $\Omega\subset\R^n$ be open bounded,
Lipschitz, connected, 
\begin{align*}
 X=\{u\in W^{1,p}(\Omega;\R^n): \det Du=1 \text{ a.e.}\}\,.
\end{align*}
We  set $W(F)=\infty$ if $\det F\ne 1$, define $W^\qc$ as in (\ref{eqdefwqcdetp})
and, for $f\in C^0(\R^n)$ with $|f(t)|\le c (1+|t|^q)$ for some $q<p$, 
\begin{equation*}
\Emitg[u]=\int_\Omega \left(W(Du) + f(u)\right) dx \text{ and }
\Emitgrel[u]=\int_\Omega \left(W^\qc(Du) + f(u)\right) dx
\end{equation*}
for $u\in X$, and $\Emitg=\Emitgrel=\infty$ on $L^1\setminus X$.
Finally assume that $W^\qc=W^\pc$. 
Then the following assertions hold.
\begin{enumerate}
\item 
$\Emitgrel$ is the relaxation of $\Emitg$ with respect to strong $L^1$ convergence, in the sense that
\begin{equation*}
\Emitgrel[u]=\inf\{\liminf_{j\to\infty} \Emitg[u_j]: u_j \in L^1(\Omega;\R^n), u_j\to u \text{ in } L^1\}\,.
\end{equation*}
\item The same holds if,
for any given relatively open set 
$\Gamma_D\subset\partial\Omega$  and $u_0\in X$,
the functionals $\Emitg$ and $\Emitgrel$ are set to be $\infty$ outside
  \begin{equation*}
\widetilde X=X\cap \{u=u_0\text{ on }\Gamma_D\}\,.
  \end{equation*}
\item The functional
$\Emitgrel$ has a minimizer in the space $\widetilde X$. 
\end{enumerate}
\end{theorem}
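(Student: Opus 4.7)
The plan is to mirror the structure of the proof of Theorem~\ref{theocorollaryorientat} almost verbatim, replacing the input from Theorem~\ref{theoorientation} by the incompressible recovery result of Theorem~\ref{theovolume}, and using the fact that the incompressibility constraint $\det Du=1$ is an affine (indeed linear) condition on the minors, which makes it even easier to pass to the limit than the strict positivity constraint.

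For the upper bound I would argue as follows. Given $u\in X$, Theorem~\ref{theovolume} produces a sequence $u_j\in W^{1,p}(\Omega;\R^n)$ with $u_j-u\in W^{1,p}_0$, $u_j\weakto u$, and $\limsup_j\int_\Omega W(Du_j)\,dx\le\int_\Omega W^\qc(Du)\,dx$. Because $W(F)=\infty$ unless $\det F=1$, the finiteness of $\int_\Omega W(Du_j)\,dx$ forces $\det Du_j=1$ a.e., so $u_j\in X$, and if $u=u_0$ on $\Gamma_D$ then so does $u_j$ since $u_j-u\in W^{1,p}_0(\Omega;\R^n)$. The continuity of the perturbation $\int_\Omega f(u)\,dx$ along the strong $L^p$-convergent (hence $L^1$-convergent) sequence yields $\limsup_j \Emitg[u_j]\le \Emitgrel[u]$ in both the free and the mixed boundary setting.

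For the lower bound I would take a sequence $u_j\to u$ in $L^1$ with $\liminf_j\Emitg[u_j]<\infty$; otherwise the inequality is trivial. Coercivity \eqref{eqgrowthwincompr} gives $W^\qc(F)\ge|F|^p/c-c$, and since $|f(u)|\le c(1+|u|^q)$ with $q<p$ an elementary Young's-inequality argument shows the $W^{1,p}$-norms of $u_j$ are bounded. Passing to a subsequence, $u_j\weakto u$ in $W^{1,p}$; the trace operator carries the Dirichlet condition to the limit. For the determinant constraint, $p\ge n$ and the weak continuity of the determinant \cite{Mueller1990} give $\det Du_j\weakto\det Du$ in $L^1$ on compact subsets, and since $\det Du_j=1$ a.e.\ the weak limit must equal $1$ a.e., so $u\in\widetilde X$.

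The lower semicontinuity step is identical to the proof of Theorem~\ref{theocorollaryorientat}: writing $W^\qc(F)=g(M(F))$ with $g$ convex and lower semicontinuous, weak $L^1$ convergence of all minors of $Du_j$ on compacts (using $p\ge n$) together with Jensen's inequality yields
\begin{equation*}
\int_K W^\qc(Du)\,dx\le\liminf_{j\to\infty}\int_\Omega W^\qc(Du_j)\,dx,
\end{equation*}
and taking the supremum over compact $K\subset\Omega$, combined with the continuity of $\int_\Omega f(u)\,dx$, gives $\Emitgrel[u]\le\liminf_j\Emitg[u_j]$. Assertion~(iii) then follows from the direct method: a minimizing sequence for $\Emitgrel$ in $\widetilde X$ is bounded in $W^{1,p}$ by coercivity, admits a weakly convergent subsequence whose limit lies in $\widetilde X$ by the argument above, and the functional is lower semicontinuous along this subsequence. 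I expect no serious obstacle beyond what already appears in the orientation-preserving case; indeed, the step that was delicate there, namely using $\theta(t)\to\infty$ as $t\to 0$ to rule out $\det Du=0$, is here trivial because the pointwise equality $\det Du_j=1$ is preserved under weak $L^1$-convergence.
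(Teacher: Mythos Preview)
Your proposal is correct and matches the paper's approach exactly: the paper's proof simply reads ``The proof is analogous to the proof of Theorem~\ref{theocorollaryorientat}'', and you have faithfully written out that analogy, swapping Theorem~\ref{theoorientation} for Theorem~\ref{theovolume} and noting that the constraint $\det Du_j=1$ passes to the limit directly via weak $L^1$ convergence of the determinant rather than requiring the divergence of $\theta$ at zero.
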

\begin{proof}
The proof is analogous to the proof of Theorem \ref{theocorollaryorientat}. 
\end{proof}

\subsection{Examples}
We first consider the two-well problem in two dimensions, a classical model for
 microstructure in martensite \cite{BallJames87,BallJames92,Bhatta,Ball2002}.
Precisely, we define  
\begin{equation*}
W_{2W}(F)=
  \mathrm{dist}^2(F, SO(2) U_1 \cup SO(2) U_2 ) + \theta(\det F)\,,
\end{equation*}
where
\begin{align*}
   U_1=
  \begin{pmatrix}
    \lambda&0\\0&1/\lambda
  \end{pmatrix}\,,\quad
  U_2=
  \begin{pmatrix}
    1/\lambda&0\\0&\lambda
  \end{pmatrix}\,,
\end{align*}
for some fixed $\lambda>1$ are the eigenstrains of the two martensitic phases
and  $\theta\in C^0((0,\infty),[0,\infty))$ is a convex function which obeys
(\ref{eqasstheta}) and with  $\lim_{t\to0} \theta(t)=\infty$, for example 
$f(t)=(t-1/t)^2$, extended with $\theta=\infty$ on $(-\infty,0]$. One is then interested in the functional  
$\Enog_{2W}: W^{1,2}(\Omega;\R^2)\to[0,\infty]$, 
\begin{equation*}
\Enog_{2W}[u]=\int_\Omega W_{2W} (Du)\, dx\,.
\end{equation*}
In \cite{ContiDolzmanntwowell} it was shown that the quasiconvex envelope 
of $W_{2W}$ is given by 
  \begin{equation*}
    W_{2W}^\qc(F) = 
h(|Fv|, |Fw|, \det(F)) + \theta(\det F)\,,
  \end{equation*}
where $v=(e_1+e_1)/\sqrt2$, 
 $w=(e_1-e_1)/\sqrt2$ and
$h$ is defined by
  \begin{equation*}
  h(x,y,d)= \min_{\xi\in [x,\infty),\,\eta\in [y,\infty)} \left(
  \xi^2+\eta^2+|U_1|^2 -2 \sqrt{A(\xi,\eta,d)} \right)\,,  
  \end{equation*}
with
\begin{alignat*}1
  A(x,y,d)& =(x^2+y^2)\frac{|U_1|^2}2 +
    (\lambda^2-\frac1{\lambda^2}) \sqrt{x^2y^2-d^2} + 2 d \,,
\end{alignat*}
and that $W_{2W}^\qc$ is polyconvex. Theorem \ref{theocorollaryorientat}
then shows that the relaxation of $\Enog_{2W}$ is given by
\begin{equation*}
\Enogrel_{2W}[u]=\int_\Omega W^\qc_{2W} (Du)\, dx\,.
\end{equation*}

A corresponding result holds for models with one potential well only, which can be recovered setting $\lambda=1$ in the previous expressions, the  quasiconvex envelope is given for example in \cite{Silhavy1999}. 

A related situation with the incompressibility constraint can be obtained from
the study of nematic elastomers \cite{DesimoneDolzmann00,Silhavy1999,DeSimoneDolzmannARMA2002,WarnerTerentjevBook,Silhavy2007}. They are composite materials in which a rubber (polymer) matrix is coupled to a nematic liquid crystal. The rubber has entropic elasticity and is usually modeled as incompressible; the ordering of the nematic liquid crystal leads to elongation in the direction of the nematic order parameter. After minimizing out the nematic director, the standard model \cite{WarnerTerentjevBook}  can be cast in the form
\begin{equation*}
  W_\nem(F)=
  \begin{cases}
    \sum_{i=1}^n \left(\frac{\lambda_i(F)}{\gamma_i}\right)^p
& \text{ if $\det F=1$}\\
    \infty & \text{ if } \det F\ne 1
  \end{cases}
\end{equation*}
where $\lambda_i(F)$ are the singular values of $F$, i.e., the eigenvalues of $(F^TF)^{1/2}$, and $\gamma_i\in (0,\infty)$ are material parameters with $\prod_{i=1}^n \gamma_i=1$. 

In two dimensions one can assume  $\gamma_2=1/\gamma_1>1$ and, taking the natural exponent $p=2$, one can show that \cite{DesimoneDolzmann00}
\begin{equation*}
 W_\nem^\qc(F)=
 \begin{cases}
   \infty & \text{ if } \det F\ne 1\\
2 & \text{ if } \lambda_2(F)\le \gamma_2 \text{ and } \det F=1\\
   W_\nem(F) & \text{ otherwise.}
 \end{cases}
\end{equation*}
Further, the function $W_\nem^\qc$ is polyconvex  \cite{DesimoneDolzmann00}. 
Therefore Theorem \ref{theovolume} shows that the relaxation of $\Enog_\nem$ is given by $\Enogrel_\nem$. 

In the physically relevant situation $n=3$ a similar expression is 
analytically known and turns out to be polyconvex for all $p\ge 1$ 
\cite{DeSimoneDolzmannARMA2002}. Theorem \ref{theocorollaryorientat} then  states that 
 $\Enogrel_\nem$ is the relaxation of  $\Enog_\nem$ for all $p\ge 3$. This does not include, 
however, the physically relevant exponent $p=2$. In this case, as noted in the 
introduction, the variational model with the pointwise constraint on the determinant would
predict cavitation  under tension.
Tensile experiments are however carried out in a regime 
in which cavitation 
does not occur, possibly due to metastability or to additional energy 
contributions not modeled by $W_\nem$ \cite{WarnerTerentjevBook}. Indeed, 
numerical simulations based on the quasiconvex envelope $W_\nem^\qc$ and 
excluding cavitation via the choice of a the finite-element space of 
continuous function lead to good agreement with experimental observations 
\cite{ContiDesimoneDolzmann2002}.

\subsection{Strategy of the proof}
The ``classical'' construction of the upper bound for relaxation theorems is
based on density and interpolation. Given a function $u\in W^{1,p}$
one first approximates it strongly in $W^{1,p}$ by a piecewise affine
function and then uses a ``good'' test function $\varphi$ in each of the
pieces where $u$ is affine. In doing this it is important that the energy is
continuous along the approximating sequence. In our
case the energy is infinite on all deformations which are not orientation-preserving, therefore one would need  to approximate orientation-preserving 
$W^{1,p}$ maps with piecewise affine orientation-preserving maps, 
a problem which, as we discussed 
above, is very difficult and in general unsolved.

Here we show that  the passage through piecewise affine maps is not needed.
The key idea is to locally use the {\em composition} of the limiting map $u$
with the map with oscillating gradient $\varphi$. Taking the composition  preserves the
conditions on the determinant, and the map $u\circ\varphi$ belongs to $W^{1,p}$ 
since $u\in
W^{1,p}$ and $\varphi$ is Lipschitz. The key point is to prove that the unrelaxed energy 
of the composition is close to the relaxed energy of $u$.

For simplicity we focus here on a small ball $B$ where $Du$ is close
to the 
identity (after a change of variables this is true around any Lebesgue point
of $Du$). Let $\varphi\in W^{1,\infty}(B,\R^n)$ be a map from 
(\ref{eqdefwqcdetp}), which in particular is the identity on the boundary of $B$. The key idea 
is to define $z=u\circ \varphi$, so that $Dz=Du\circ\varphi D\varphi$. If $Du$ is close to the identity and $Dz$
is close to $D\varphi$ one then (heuristically) obtains
\begin{equation*}
\int_B W(Dz)\, dx\sim \int_B W(D\varphi)\, dx \sim |B|W^\qc(\Id)\, dx\sim \int_B
W^\qc(Du)\, dx\,.  
\end{equation*}
This estimate can be made precise if $Du$ is uniformly close to the identity and $W$ is 
uniformly continuous. Both properties are true only in appropriate subsets,
and hence the main part of this work is devoted to the treatment of the 
exceptional sets, under the assumption that $\|Du-\Id\|_{L^p(B)}$ is small.

In the incompressible case, since $\varphi$ is a volume-preserving bilipschitz
map a change of variables shows that the smallness of $Du-\Id$ in $L^p$ immediately translates into the 
smallness of $(Du-\Id)\circ \varphi$ in $L^p$ and hence of the contribution 
of the set where $Dz-D\varphi$ is large. This renders the volume-preserving construction in Lemma \ref{lemmaconstr1} simpler than the corresponding one in the orientation-preserving case.

In the orientation-preserving case the situation is more complex, since a
factor $\det D\varphi$ arises from the change of variables formula. One of the problematic terms is
\begin{equation*}
   \int_{\{|Du-\Id|\circ \varphi >\eps\}} |Du-\Id|^p (\varphi(x))  dx
  =  \int_{\{|Du-\Id| >\eps\}}\frac{ |Du-\Id|^p(y)}{\det D\varphi(y)} dy \,.
\end{equation*}
The fact that  $\varphi$ is Lipschitz provides a bound on $\det D\varphi$ but
not on its inverse. However, by the very same change of variables formula we
know that $1/\det D\varphi\in L^1$, therefore the above expression has the
form of the integral of the product of two $L^1$ functions. This is 
in general not  defined, but can be controlled if one of the two functions is
first shifted by an ``appropriate amount'', as done for example when  taking the
convolution of two $L^1$ functions. Precisely, this means that for every choice
of $f,g\in L^1(B_1)$ there are many $\xone\in B_{1/2}$ such that
\begin{equation*}
  \int_{B_{1/2}} f(x) g(x-\xone) dx \le  \|f\|_{L^1(B_1)} \|g\|_{L^1(B_1)}\,,
\end{equation*}
see Lemma \ref{lemmachoicex1} below for details.

In both cases the local construction is then extended to a global one by a covering
argument, see Lemma \ref{lemmarecoverydp} below.

\section{Construction of orientation-preserving maps}
\label{secorientationpres}
\subsection{Local construction}
Before presenting the construction we show how the translation is exploited. 
We focus here on the derivation of the estimates, and address measurability and weak differentiability issues in Appendix \ref{secappchainrule}.
\begin{lemma}\label{lemmachoicex1}
  Let $\psi\in W^{1,\infty}(B_r; \overline{B_r})$,   $g\in L^1(B_{r})$, $f\in L^1(B(x_0, 2r))$ for some $x_0\in \R^n$, $r>0$. Then there 
exists a measurable set $E\subset B(x_0,r)$ of positive $\calL^n$  measure 
with the following property. 
For $\xone\in E$ 
  the function 
  \begin{equation*}
    \widetilde f(x)=f(\psi(x-\xone) + \xone)g(x-\xone)
  \end{equation*}
belongs to $L^1(B(\xone,r))$ with
  \begin{equation*}
    \|\widetilde f\|_{L^1(B(\xone,r))} \le  \frac{1}{|B_r|} \|f\|_{L^1(B(x_0,2r))} \|g\|_{L^1(B_{r})} \,.
  \end{equation*}
\end{lemma}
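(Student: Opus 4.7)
The plan is a standard averaging argument over the translation parameter $\xone$: the right-hand side of the claim is what one obtains by dividing the absolute bound
\begin{equation*}
\int_{B(x_0,r)} \|\widetilde f\|_{L^1(B(\xone,r))}\, d\xone \le \|f\|_{L^1(B(x_0,2r))}\,\|g\|_{L^1(B_r)}
\end{equation*}
by the volume $|B(x_0,r)| = |B_r|$, and the set $E$ of ``good'' translations will be produced by a mean-value argument applied to this inequality. The content is therefore to establish the displayed estimate.

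To prove it, I would first apply the substitution $x' = x - \xone$ in the defining integral of the $L^1$ norm, which rewrites
\begin{equation*}
 \|\widetilde f\|_{L^1(B(\xone,r))} = \int_{B_r} |f(\psi(x')+\xone)|\,|g(x')|\, dx'\,.
\end{equation*}
I would then integrate over $\xone \in B(x_0,r)$ and swap order of integration by Fubini, so that for each fixed $x' \in B_r$ one is left with the inner integral $\int_{B(x_0,r)} |f(\psi(x')+\xone)|\, d\xone$. A second change of variables $y = \psi(x')+\xone$ (a pure translation with unit Jacobian) converts this into the integral of $|f|$ over the translated ball $B(x_0 + \psi(x'), r)$. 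Since $\psi$ takes values in $\overline{B_r}$, the center $x_0+\psi(x')$ lies within distance $r$ of $x_0$, so this ball is contained in $B(x_0,2r)$, yielding the pointwise bound $\|f\|_{L^1(B(x_0,2r))}$ on the inner integral. Pulling this factor outside and integrating $|g|$ on $B_r$ gives the displayed inequality.

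Finally, defining $E$ to be the sublevel set of $\xone \mapsto \|\widetilde f\|_{L^1(B(\xone,r))}$ at the level appearing on the right-hand side of the claim, the estimate on the mean forces $|E|>0$, since a nonnegative function strictly exceeding its mean on a set of full measure is impossible. The only genuine obstacle is the measurability of $\xone \mapsto \|\widetilde f\|_{L^1(B(\xone,r))}$ (and, underneath it, the measurability of $\widetilde f$ itself for the relevant $\xone$), which is needed both to justify Fubini and to guarantee that the candidate $E$ is Borel. This rests on the fact that composition with the Lipschitz map $x' \mapsto \psi(x')+\xone$ sends null sets to null sets and preserves measurability, plus a Lusin-type approximation of $f$; as the authors indicate before the statement, I would defer this verification to the appendix and keep the body of the lemma focused on the $L^1$ estimate.
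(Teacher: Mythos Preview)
Your proposal is correct and follows essentially the same route as the paper: reduce to nonnegative integrands, change variables $x'=x-\xone$, integrate in $\xone$ over $B(x_0,r)$, swap the order of integration, bound the inner $\xone$-integral by $\|f\|_{L^1(B(x_0,2r))}$ using that $\psi(x')\in\overline{B_r}$, and conclude by the mean-value argument. One small caution: your parenthetical explanation of the measurability step (``composition with the Lipschitz map sends null sets to null sets and preserves measurability'') is not quite the right mechanism---the paper's appendix gives a counterexample showing that $f\circ\psi$ need not be measurable for fixed $\xone$, and instead establishes $\mathcal{L}^{2n}$-measurability of $(x,\xone)\mapsto\widetilde f(x)$ by exploiting the translation in $\xone$ via Fubini; since you defer this to the appendix anyway, the body of your argument is fine.
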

\begin{proof}
We can assume without loss of generality that $f,g\ge0$. 
The function $(x,\xone)\mapsto \widetilde f(x)$ is $\calL^{2n}$-measurable by Lemma \ref{lemmacont}. 
 We define
$h:B(x_0,r)\to\R\cup\{\infty\}$ by 
  \begin{equation*}
    h(\xone)=\int_{B(\xone,r)} \widetilde f(x)\, dx=
\int_{B(\xone,r)} f(\psi(x-\xone)+\xone)g(x-\xone)\,dx
  \end{equation*}
  and change variables to
  \begin{alignat*}1
    h(\xone)&=\int_{B_r} f(\psi(x')+\xone) g(x')dx'\,.
  \end{alignat*}
  We integrate over all $\xone\in B(x_0,r)$ and interchange the order of integration to obtain
\begin{alignat*}1
  \int_{B(x_0,r)} h(\xone) d\xone& =\int_{B_r}\left(\int_{B(x_0,r)} 
f(\psi(x')+\xone) g(x') d\xone \right)dx'\\
&\le \|f\|_{L^1(B(x_0,2r))} \|g\|_{L^1(B_r)}\,.
\end{alignat*}
To conclude we observe that $h$ cannot be almost everywhere larger than its average.
\end{proof}
 
\begin{lemma}\label{lemmaconstr2}
Assume that the function   $W\in
C^0(\R^{n\times n}_+,[0,\infty))$  satisfies
the growth condition (\ref{eqgrowthwpd}) with $p\geq 1$ and the structure condition
(\ref{eqasstheta}) and fix 
   $F\in \R^{n\times n}_+$ and $\eta>0$. Then there is $\delta>0$ such that for any 
   $B=B(x_0,r)$ and $u\in W^{1,p}(B,\R^n)$ with 
  \begin{equation}\label{eqlemmacostr2deltadef}
    \strokedint_B \left(|Du-F|^p+|\theta(\det Du)-\theta(\det F)|\right)dx
\le \delta \text{ and } \det Du>0 \text{ a.e.}
  \end{equation}
  there are $\xone\in B(x_0,r/2)$ 
  and $z\in W^{1,p}(B,\R^n)$ with
$\det Dz>0$ a.e.,
 $z=u$ on $B(x_0,r)\setminus B(\xone,r/2)$ and
  \begin{equation}\label{eqlemmacostrrisb1}
    \int_{B(\xone,r/2)} W(Dz)\, dx\le     \int_{B(\xone,r/2)}
    (W^\qc(Du)+ \eta) \, dx  \,.
  \end{equation}
  Additionally,
  \begin{equation*}
    \int_B |u-z|^pdx \le 
    c r^p  \int_B (W^\qc(Du)+1)dx\,.
  \end{equation*}
  If $u$ is Lipschitz, then the same is true for $z$.
\end{lemma}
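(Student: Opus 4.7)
The plan is to carry out the composition construction of Section~2.4 locally on the ball $B(\xone,r/2)$. First I would extract from (\ref{eqdefwqcdetp}), after rescaling from $B_1$ to $B_{r/2}$, a near-optimal test function $\psi\in W^{1,\infty}(B_{r/2};\R^n)$ with $\psi(y)=Fy$ on $\partial B_{r/2}$ and $\strokedint_{B_{r/2}}W(D\psi)\,dy\le W^\qc(F)+\eta/4$, and set $\varphi_0:=F^{-1}\psi$, so that $\varphi_0=\Id$ on $\partial B_{r/2}$ and $FD\varphi_0=D\psi$. By a standard fine-scale lamination that replaces $\psi$ with small-amplitude oscillating perturbations of $Fy$, the reference map can be arranged so that $\varphi_0(B_{r/2})\subset\overline{B_{r/2}}$ (which is needed for Lemma~\ref{lemmachoicex1} to apply), together with $|D\varphi_0|\le M$ and $\det D\varphi_0\ge\mu>0$ on a subset of near-full measure in $B_{r/2}$.

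For $\xone\in B(x_0,r/2)$ (to be determined) I set $\varphi(x):=\varphi_0(x-\xone)+\xone$ on $B(\xone,r/2)$, and define $z(x)=u(\varphi(x))$ there and $z=u$ elsewhere in $B(x_0,r)$. Since $\varphi$ is Lipschitz and $u\in W^{1,p}$, the composition $z\in W^{1,p}$ with $Dz(x)=Du(\varphi(x))D\varphi(x)$ (the chain rule is handled in the appendix); the condition $\varphi=\Id$ on $\partial B(\xone,r/2)$ ensures the traces glue, and $\det Dz=\det Du(\varphi)\det D\varphi>0$ a.e. To pick $\xone$ I would apply Lemma~\ref{lemmachoicex1} to a finite collection of pairs $(f_i,g_i)$, with $f_i$'s built from $|Du|^p$, $\theta(\det Du)$ and indicators of bad subsets of $B(x_0,r)$, and $g_i$'s built from $|D\varphi_0|^p$ and $1+\theta(\det D\varphi_0)$ on $B_{r/2}$, and then use a Markov-type argument on the intersection of the resulting sets of good $\xone$ to select a single $\xone\in B(x_0,r/2)$ for which all bounds $\int_{B(\xone,r/2)}f_i(\varphi(x))g_i(x-\xone)\,dx\le C\|f_i\|_{L^1}\|g_i\|_{L^1}/|B_{r/2}|$ hold simultaneously.

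The heart of the estimate is the splitting $B(\xone,r/2)=G\cup E$ with good set $G=\{x:|Du(\varphi(x))-F|\le\eps\}$. Markov and the Lemma~\ref{lemmachoicex1} bounds immediately give $|E|\le C\delta/\eps^p\,|B_{r/2}|$ and analogous smallness of $\int_E\bigl(|Du(\varphi)|^p+(1+\theta(\det Du(\varphi)))(1+\theta(\det D\varphi))\bigr)\,dx$. On $G$ I would factor $Dz=A\cdot FD\varphi$ with $A=Du(\varphi)F^{-1}$, $|A-\Id|\le\eps|F^{-1}|$; because $FD\varphi_0$ lies in a compact subset of $\R^{n\times n}_+$ (this is where $\det D\varphi_0\ge\mu$ is essential), uniform continuity of $W$ on that compact set yields $W(Dz)\le W(FD\varphi)+\omega(\eps)$ pointwise on $G$, and the change of variables $y=x-\xone$ gives $\int_G W(Dz)\,dx\le|B_{r/2}|(W^\qc(F)+\eta/4+\omega(\eps))$. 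On $E$, the upper growth (\ref{eqgrowthwpd}) together with the product assumption (\ref{eqasstheta}) applied to $\theta(\det Du(\varphi)\cdot\det D\varphi)$ reduces the contribution exactly to the Lemma~\ref{lemmachoicex1} bounds above.

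To close, continuity of $W^\qc$ at the interior point $F\in\R^{n\times n}_+$ combined with (\ref{eqlemmacostr2deltadef}) gives $\strokedint_{B(\xone,r/2)}W^\qc(Du)\,dx\ge W^\qc(F)-\eta/4$ for $\delta$ small; choosing $\eps$ small enough that $\omega(\eps)\le\eta/4$ and then $\delta$ small enough to kill the bad-set contributions proves (\ref{eqlemmacostrrisb1}). The $L^p$ estimate on $u-z$ follows by representing $u(x)-u(\varphi(x))$ as a line integral of $Du$ along the segment joining $x$ and $\varphi(x)$ (via mollification), using $|x-\varphi(x)|\le r$ and the lower bound $W^\qc(F)\ge\tfrac{1}{c}|F|^p-c$ from (\ref{eqgrowthwpd}). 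Lipschitz preservation is immediate since $\varphi_0$ is Lipschitz. The principal hurdle is keeping $FD\varphi$ in a compact subset of $\R^{n\times n}_+$ so that $W$ is uniformly continuous there; this forces the lower bound on $\det D\varphi_0$, and the small residual set where this fails is handled via the multiplicative bound (\ref{eqasstheta}) on $\theta$, which is precisely what lets $\theta(\det Dz)$ be controlled without any pointwise lower bound on $\det D\varphi$.
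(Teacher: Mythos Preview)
Your plan follows the paper's approach closely: compose $u$ with a near-optimal Lipschitz test map, pick the center $\xone$ via Lemma~\ref{lemmachoicex1}, split according to the size of $|Du\circ v-F|$ and of $\det D\varphi$, use uniform continuity of $W$ on the good set, and the growth bound (\ref{eqgrowthwpd}) together with (\ref{eqasstheta}) on the two exceptional sets. Two simplifications are worth noting.

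First, your ``fine-scale lamination'' is unnecessary and potentially misleading. The containment $\varphi_0(B_{r/2})\subset\overline{B_{r/2}}$ holds automatically: since $\varphi_0=F^{-1}\psi\in W^{1,\infty}$ with $\det D\varphi_0>0$ a.e.\ and $\varphi_0=\Id$ on $\partial B_{r/2}$, this is exactly Ball's result \cite[Theorem~1]{Ball1981}. The bound $|D\varphi_0|\le M$ is just $\varphi_0\in W^{1,\infty}$, and the existence of $\mu>0$ with $|\{\det D\varphi_0<\mu\}|$ arbitrarily small (in the weighted sense the argument needs) follows from $\theta(\det(F^{-1}D\psi))\in L^1$, which (\ref{eqasstheta}) and the growth condition give. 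Modifying $\psi$ by lamination while preserving $\strokedint W(D\psi)\le W^\qc(F)+\eta/4$ would in fact be delicate, since the whole point of $\psi$ is that $D\psi$ is far from $F$ on a large set; fortunately no modification is needed.

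Second, the $L^p$ estimate is simpler than you suggest: since $z=u$ on $\partial B'$, Poincar\'e gives $\int_{B'}|u-z|^p\le cr^p\int_{B'}|D(u-z)|^p$, and the lower growth bound $|G|^p\le cW(G)+c$ applied to $Dz$ and $Du$, together with the already-proven bound on $\int_{B'}W(Dz)$, finishes it. Your line-integral approach would require controlling $\int_{B'}|Du((1-t)\varphi(x)+tx)|^p\,dx$ uniformly in $t$, but the interpolated map $x\mapsto(1-t)\varphi(x)+tx$ need not have Jacobian bounded away from zero, so a change of variables there is not straightforward.

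Finally, the paper applies Lemma~\ref{lemmachoicex1} a single time, with the pair $f=|Du-F|^p+|\theta(\det Du)-\theta(\det F)|$ and $g=1+\theta(\det(F^{-1}D\psi))$; your multiple-pair-plus-Markov scheme works as well but is heavier than needed.
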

\begin{proof} 
The $L^p$ bound follows from the bound on $W(Dz)$
  using Poincar\'e and the growth   condition, hence we only need to prove (\ref{eqlemmacostrrisb1}). 
   
By the definition of $W^\qc(F)$ there is 
  $\varphi_\eta\in W^{1,\infty}(B_{r/2},\R^n)$ such that
  $\varphi_\eta(x)=Fx$ on 
  $\partial B_{r/2}$ and
  \begin{equation}\label{eqdefvarphieta}
    \strokedint_{B_{r/2}} W(D\varphi_\eta)dx \le W^\qc(F) + \eta\,.    
  \end{equation}
  By the growth condition  (\ref{eqgrowthwpd}) we have
  $\theta(\det D\varphi_\eta)\in L^1(B_{r/2})$, and with (\ref{eqasstheta})
also  $\theta(\det (F^{-1}D\varphi_\eta))\in L^1(B_{r/2})$.
Since  $\det D\varphi_\eta>0$ almost everywhere
 there is $\gamma>0$ (depending on $F$ and $\eta$) such that
  \begin{alignat}1\nonumber
&\int_{B_{r/2}\cap\{\det D\varphi_\eta<\gamma\}} (1+\theta(\det (F^{-1}D\varphi_\eta))) dx \\
\label{eqdefagmma}
&\hskip1cm \le  \frac{|B_{r/2}|  \eta}{(1+\|F^{-1}D\varphi_\eta\|_{L^\infty}^p)(1 + |F|^p + \theta(\det F))}\,.
  \end{alignat}
The choice of the constant on the right-hand side will become clear after
(\ref{eqwdzomegad}).

  The function $F^{-1}\varphi_\eta$ is Lipschitz continuous and therefore, by  
\cite[Theorem 1]{Ball1981}, $F^{-1}\varphi_\eta(B_{r/2})\subset \overline{B_{r/2}}$. 
For some
$\xone\in B(x_0,r/2)$ chosen below, we construct the function $z:B=B(x_0,r)\to\R^n$ by 
  \begin{equation*}
    z(x)=
    \begin{cases}
      u(F^{-1}\varphi_\eta(x-\xone)+\xone) &\text{ if } x\in B'=B(\xone,r/2)\,, \\
      u(x) &\text{ otherwise.}
    \end{cases}
  \end{equation*}
By Lemma \ref{lemmachainrule} 
(with  $\psi=F^{-1}\varphi_\eta$), there exists a null set $N$ such 
for all  choices of $\xone\not\in N$ the first expression belongs to $W^{1,1}$. 
Further we can compute its weak derivative by the usual chain rule, 
and  the  traces on $\partial B'$ of the two expressions coincide. 
In particular, $z\in W^{1,1}(B';\R^n)$. 
In order to obtain an estimate on the derivative we choose $\xone\in E\setminus N$ via 
Lemma \ref{lemmachoicex1}, applied to the ball
  $B$ with
  $f=|Du-F|^p+|\theta(\det Du)-\theta(\det F)|$ and $g=1+\theta(\det(F^{-1}D\varphi_\eta))$.  Then 
  \begin{equation}\label{eqlemmaghd}
    \strokedint_{B'} (1+\theta(\det Dv))\,  (|Du-F|^p+|\theta(\det Du)-\theta(\det F)|)\circ v \, dx \le c_{\eta}\delta \,,
  \end{equation}
where  $v(x)=F^{-1}\varphi_\eta(x-\xone)+\xone$ and
\begin{equation*}
c_{\eta}=2^n \strokedint_{B_{r/2}} (1+\theta(\det F^{-1}D\varphi_\eta))\, dx
\end{equation*}
(since $W$ and $F$ are fixed for the entire proof, we emphasize the dependence of the constants on $\eta$).
For the rest of the proof we only need to deal with the
fixed inner ball $B'$.

  Let $R_\eta=\|Dv\|_{L^\infty}$, $M_\eta=\|D\varphi_\eta\|_{L^\infty}$. 
Since $W$ is continuous   in  $\R^{n\times n}_+$ there
  is  $\varepsilon\in(0,1)$  such that
  \begin{align}\begin{aligned}
    |W(\xi)-W(\zeta)|\le \eta \text{ for all } &\xi,\zeta\in 
\R^{n\times n}_+ \text{ with }  |\zeta|\le M_\eta, \\
&\det\zeta\ge\gamma \text{ and }
|\xi-\zeta|\le  \varepsilon R_\eta\,,     
\end{aligned}
\label{eqcontwabr}
  \end{align}
where $\gamma$ was defined in (\ref{eqdefagmma}) and depends  only on $W$, $F$ and $\eta$.
Moreover, $W^\qc$ is continuous in  $\R^{n\times n}_+$. This is proven for example in 
\cite[Th. 2.4 and Prop. 2.3]{Fonseca1988} by showing that $W^\qc$ is 
rank-one convex, and hence separately convex, in the open set $\R^{n\times n}_+$.
Hence we may assume additionally that
  \begin{equation}\label{eqcontwAF}
    |W^\qc(\xi)-W^\qc(F)|+|\theta(\det\xi)-\theta(\det F)|\le \eta \text{ for all  $\xi$ with }  |\xi-F|\le
    \varepsilon \,.
  \end{equation}
  The parameter $\varepsilon$ depends on $\eta$,
  but not   on $u$ and  $\delta$. 
  We compute, with $\widehat\varphi_\eta(x)=\varphi_\eta(x-\xone)$,
  \begin{alignat}1\nonumber
    \int_{B'} (W(Dz)-W^\qc(Du))dx
=&    \int_{B'} (W(Dz)-W(D\widehat\varphi_\eta))dx\\
\nonumber
&+\int_{B'}(W(D\widehat\varphi_\eta)-W^\qc(F))dx \\
&+\int_{B'}(W^\qc(F)-W^\qc(Du))dx   \label{eqwdzwqcduing}
  \end{alignat}
and estimate the three terms separately.

  The second term in (\ref{eqwdzwqcduing}) is bounded by $\eta|B'|$ by the
  definition of   $\varphi_\eta$, see (\ref{eqdefvarphieta}).
  To treat the third one we use  (\ref{eqcontwAF}) to obtain 
  \begin{equation*}
    W^\qc(F)\le W^\qc(Du)+ \eta 
\hskip1cm\text{ on the set where $|Du-F|\le \eps$}  \,.
  \end{equation*}
The complement is small, indeed, from 
 (\ref{eqlemmacostr2deltadef}) we obtain
  \begin{alignat*}1
    \int_{B'} (W^\qc(F)-W^\qc(Du))dx &\le \eta |B'| + W^\qc(F)
    \calL^n(|Du-F|>\eps)\\
&    \le \eta |B'| +  W^\qc(F) \frac{1}{\eps^p}  |B| \delta  \,.
  \end{alignat*}
To estimate the first term in (\ref{eqwdzwqcduing}) we distinguish three subsets:
 $\omega=B'\cap \{|Du-F|\circ v \ge\varepsilon\}$,
$\omega_d=B'\cap\{\det D\widehat\varphi_\eta<\gamma\}\setminus\omega$ 
and the rest $B'\setminus\omega\setminus\omega_d$.

In  $B'\setminus\omega\setminus\omega_d$ we have $|Dv|\le R_\eta$, $\det D\widehat\varphi_\eta\ge\gamma$, $|Du-F|\circ 
v\le \eps$ and therefore, since
  \begin{equation*}
    Dz = Du \circ v Dv = (Du-F)\circ v Dv + D\widehat\varphi_\eta 
  \end{equation*}
we obtain
\begin{equation*}
    |Dz-D\widehat\varphi_\eta|\le |Du-F|\circ v \, |Dv| \le \eps R_\eta\,.
\end{equation*}
By the continuity estimate (\ref{eqcontwabr}) we obtain
  \begin{equation*}
    |W(Dz)-W(D\widehat\varphi_\eta)|    \le \eta
  \end{equation*}
and therefore
\begin{equation*}
  \int_{B'\setminus\omega_d\setminus\omega} (W(Dz)-W(D\widehat\varphi_\eta)) dx\le \eta|B'|\,.
\end{equation*}

In the two error sets we use the growth estimate (\ref{eqgrowthwpd}), which gives
\begin{alignat}1\label{eqwdzfehlerter1}
  W(Dz)&\le c (1+|Du|^p\circ v \, |Dv|^p + \theta(\det Du\circ v\, \det Dv))\,.
\end{alignat}
With $|Dv|\le R_\eta$  and (\ref{eqasstheta}) we obtain
\begin{alignat}1\label{eqwdzfehlerter}
  W(Dz)&\le c (1+R_\eta^p |Du|^p\circ v  + (1+\theta(\det Du)\circ v) \,(1+\theta(\det Dv))\,,
\end{alignat}
where $c$ only depends on $W$.
At this point we treat the two error sets separately. For the estimate on
$\omega$ we observe that
 $|Du-F| \ge\varepsilon$ implies
  \begin{equation*}
    |Du|+1\le |Du-F|+|F|+1 \le \left( \frac{|F|+1}{\eps}+1\right) |Du-F|
  \end{equation*}
and
\begin{equation*}
  \theta(\det Du)\le |\theta(\det Du)-\theta(\det F)|+ \frac{\theta(\det F)}{\eps^p}|Du-F|^p\,.
\end{equation*}
Therefore (\ref{eqwdzfehlerter}) gives
  \begin{alignat*}1
  \int_\omega W(Dz)&\le
  c \int_{\omega} (1+\theta(\det Dv))(1+R_\eta^p|Du|^p+\theta(\det Du))\circ v
\, dx\\ 
\le
  c_{\eta}& \int_{\omega} (1+\theta(\det Dv)) (|Du-F|^p+|\theta(\det Du)-\theta(\det F)|)\circ v\, dx\\
\le   c_{\eta}& |B'| \delta\,.
  \end{alignat*}
where in the last step we used  (\ref{eqlemmaghd}).
The constant depends on $W$, $F$ and $\eta$ (via $\eps$), but not
  on $\delta$ and $u$.

In $\omega_d$ instead we have $|Du-F|\circ v\le \eps$. 
Then, recalling the continuity estimate (\ref{eqcontwAF}), we have  $|Du|\circ v\le |F|+1$ and
$\theta(\det Du\circ v)\le \theta(\det F)+1$ and therefore
(\ref{eqwdzfehlerter}) reduces to 
\begin{alignat}1\nonumber
  W(Dz)&\le c (1+ R_\eta^p (1+|F|^p)  + (1+\theta(\det F)) (1+\theta(\det Dv))\\
&\le c_*(1+\|F^{-1}D\varphi_\eta\|_\infty^p) 
(1+|F|^p+\theta(\det F))(1+\theta(\det Dv))
\,,
\label{eqwdzomegad}
\end{alignat}
with a constant $c_*>0$ which depends only on $W$.
With  (\ref{eqdefagmma}) we conclude
\begin{equation*}
  \int_{\omega_d} W(Dz)dx\le c_* |B'| \eta\,.
\end{equation*}
Adding all terms we obtain
\begin{equation*}
  \int_{B'} \left(W(Dz)-W^\qc(Du)\right)dx \le  (
3\eta +  2^n\frac{W^{\qc}(F)}{\eps^p} \delta +
 c_{\eta} \delta + c_*\eta) |B'|\,.
\end{equation*}
Since $c_*$ depends only on $W$, choosing $\delta$ sufficiently small the proof is concluded.
  \end{proof}

\subsection{Upper bound}
  \begin{lemma}[Recovery sequence]\label{lemmarecoverydp}    Let $\Omega\subset\R^n$ open, Lipschitz, bounded, 
let $W\in C^0(\R^{n\times n}_+,[0,\infty))$  obey
(\ref{eqgrowthwpd}--\ref{eqasstheta}). Then for any
$u\in
    W^{1,p}(\Omega;\R^n)$ with $\det Du>0$ almost everywhere there is 
    a sequence $u_j\rightharpoonup u$ in $W^{1,p}$ such that $\det Du_j>0$
    almost everywhere, $u_j=u$ on $\partial\Omega$ and 
    \begin{equation*} 
      \limsup_{j\to\infty}\int_\Omega W(Du_j)dx\le \int_\Omega W^\qc(Du)dx\,.      
    \end{equation*}
  If additionally $u\in W^{1,\infty}$ then also $u_j\in
    W^{1,\infty}$. 
  \end{lemma}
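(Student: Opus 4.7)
We may assume $\int_\Omega W^\qc(Du)\,dx<\infty$, as otherwise the claim is trivially satisfied by the constant sequence $u_j=u$. Since $|F|^p$ is convex and $\theta(\det F)$ (extended by $\infty$ to $\{\det F\le 0\}$) is polyconvex, the polyconvex lower bound in (\ref{eqgrowthwpd}) transfers to the quasiconvex envelope via Jensen's inequality applied to affine boundary test maps, so $|Du|^p,\theta(\det Du)\in L^1(\Omega)$ and consequently $W(Du)\in L^1(\Omega)$. This integrability will enable an absolute-continuity argument at the end.

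The idea is to iterate Lemma \ref{lemmaconstr2} through a Vitali covering. A single application on $B(x_0,r)$ produces a modification only on the inner ball $B(\xone,r/2)$, whose volume is $2^{-n}|B(x_0,r)|$; the surrounding annulus still carries the unrelaxed energy $W(Du)$. Iterating on the still-unmodified part of $\Omega$ depletes that set by a factor $1-2^{-n}$ at each stage, so geometrically many stages suffice. Fix $\eta>0$, set $\eta_k=\eta/2^k$, choose radii $\rho_k\downarrow 0$ with $\sum_k\rho_k^p$ arbitrarily small, and let $\Omega_\eta=\{x\in\Omega:\mathrm{dist}(x,\partial\Omega)>\eta\}$; confining every modification to $\Omega_\eta$ guarantees that the final function agrees with $u$ in a neighborhood of $\partial\Omega$ and hence lies in $u+W^{1,p}_0(\Omega;\R^n)$.

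Set $u_0=u$, $A_0=\emptyset$, and recursively for $k=1,\ldots,K$: at a.e.\ point $x\in\Omega_\eta\setminus A_{k-1}$ (in particular a Lebesgue point of $Du$ and of $\theta(\det Du)$), the hypothesis (\ref{eqlemmacostr2deltadef}) holds with $F=Du(x)$ and $\delta=\delta(Du(x),\eta_k)$ for all sufficiently small $r$. Extract by Vitali a countable disjoint family $B(x_i^k,r_i^k)\subset\Omega_\eta\setminus\overline{A_{k-1}}$ with $r_i^k<\rho_k$ covering $\Omega_\eta\setminus A_{k-1}$ up to a null set. Since $u_{k-1}=u$ on each such ball (inductively, modifications only occurred inside $A_{k-1}$), Lemma \ref{lemmaconstr2} yields a center $y_i^k$ (the point called $\xone$ in the lemma) and a function $z_i^k$; set $u_k=z_i^k$ on $B(x_i^k,r_i^k)$, $u_k=u_{k-1}$ elsewhere, and $A_k=A_{k-1}\cup\bigcup_iB(y_i^k,r_i^k/2)$. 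The inner balls across different $(k,i)$ are pairwise disjoint, so summing (\ref{eqlemmacostrrisb1}) together with $\sum_k\eta_k=\eta$ yields $\int_{A_K}W(Du_K)\,dx\le\int_{A_K}W^\qc(Du)\,dx+\eta|\Omega|$; outside $A_K$ one has $u_K=u$. Since $|\Omega_\eta\setminus A_K|=(1-2^{-n})^K|\Omega_\eta|\to 0$, absolute continuity of $\int W(Du)$ makes $\int_{\Omega\setminus A_K}W(Du)\,dx$ arbitrarily close to $\int_{\Omega\setminus\Omega_\eta}W(Du)\,dx$, which itself vanishes with $\eta$. The $L^p$-closeness of $u_K$ to $u$ follows from the Poincar\'e-type estimate $\int_B|u-z|^p\,dx\le cr^p\int_B(W^\qc(Du)+1)\,dx$ of Lemma \ref{lemmaconstr2} combined with the disjointness of the increments, giving $\int_\Omega|u-u_K|^p\,dx\le c\bigl(\sum_k\rho_k^p\bigr)\int_\Omega(W^\qc(Du)+1)\,dx$. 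Choosing $\eta=\eta_j\to 0$ with suitably large $K_j$ and small $\rho_k^{(j)}$, the sequence $u_j=u_{K_j}$ meets all requirements; $W^{1,p}$-boundedness is inherited from the growth bound on $W$ and the energy estimate, and Lipschitz regularity from the final sentence of Lemma \ref{lemmaconstr2}.

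\textbf{Main obstacle.} The essential difficulty is that each application of Lemma \ref{lemmaconstr2} relaxes only a $2^{-n}$ fraction of its outer ball, so a single Vitali pass cannot cover enough of $\Omega$; the iteration fixes this by depleting the unmodified measure geometrically, but requires bookkeeping on three fronts: (i) the additive errors $\eta_k$ must be summable, (ii) the successive modifications must remain strictly disjoint, which is arranged by restricting each new Vitali cover to $\Omega_\eta\setminus\overline{A_{k-1}}$, and (iii) the residual $\int_{\Omega\setminus A_K}W(Du)$ over the shrinking unmodified remainder must vanish, which is precisely where the reduction to $\int_\Omega W^\qc(Du)\,dx<\infty$ and the ensuing $L^1$-integrability of $W(Du)$ become essential.
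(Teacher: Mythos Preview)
Your proof is correct and follows essentially the same strategy as the paper's: iterate Lemma~\ref{lemmaconstr2} on Vitali families, exploiting that each application modifies $u$ only on the inner half-ball so that the unmodified region still carries the original $u$ and the Lebesgue-point data need not be updated. The paper's differences are purely bookkeeping---it selects finitely many balls per stage (covering half the volume, giving depletion factor $1-2^{-n-1}$ rather than your $1-2^{-n}$), uses a single fixed $\eta$ (your summable $\eta_k$ are unnecessary since the inner balls across all stages are pairwise disjoint), and omits the auxiliary $\Omega_\eta$; the finite covers also make your exact equality $|\Omega_\eta\setminus A_K|=(1-2^{-n})^K|\Omega_\eta|$ immediate, whereas with countable covers one should check $|\overline{A_{k-1}}\setminus A_{k-1}|=0$.
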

  \begin{proof}
    Fix $\eta>0$. It suffices to construct $w$ with $\|u-w\|_p\le \eta$,
$w=u$ on $\partial\Omega$,
    and $\int_\Omega W(Dw)dx\le \int_\Omega W^\qc(Du)dx+\eta$.

    If $\int_\Omega W^\qc(Du)dx=\infty$ the constant sequence will do, hence we
    can assume that $W^\qc\circ Du\in L^1$.
    By convexity of $\theta$ and of the $p$-norm the definition of $W^\qc(F)$ gives 
    \begin{equation*}
      \frac1c |F|^p +\frac1c\theta(\det F)-c\le W^\qc(F)\text{ for all $F$},
    \end{equation*}
    therefore $|Du|^p$ and
    $\theta(\det Du)$ are also integrable. We denote by  $E$ the set of
    Lebesgue points of $Du$ and $\theta(\det Du)$.
    For every $x\in E$  we set $F(x)=Du(x)$  and choose $\delta(x)$ as in
    Lemma \ref{lemmaconstr2} 
    for this $F$ and $\eta$ as above.

    The construction is done by successive application of 
    Lemma \ref{lemmaconstr2}. We set $w_0=u$, $\Omega_0=\Omega$ and describe
    how to pass from   $(w_j,\Omega_j)$ to $(w_{j+1},\Omega_{j+1})$.
    For all $x\in E\cap \Omega_j$ we choose
     $r_j(x)\in(0,\eta)$ such that $B(x,r_j(x))\subset \Omega_j$ and
    \begin{equation*}
      \strokedint_{B(x,r)} \left(|Dw_j-F(x)|^p+|\theta(\det Dw_j)-\theta(\det F(x))|\right) 
dx' \le \delta(x) 
    \end{equation*}
for all $r<r_j(x)$. 
    This gives a fine cover of $E\cap\Omega_j$. We extract a disjoint subcover
    $B(x_k,r_k)_{k\in\N}$ and from this subcover finitely many balls
    $B(x_k,r_k)_{k=0,\dots, M}$ which cover at least half the volume of
    $\Omega_j$. 

    We set $w_{j+1}=w_j$ on $\Omega\setminus \cup_{k=0}^M B(x_k,r_k)$
    and define $w_{j+1}$ as the result of  Lemma \ref{lemmaconstr2} in each of
    the 
    balls. Then $w_{j+1}\in W^{1,p}(\Omega;\R^n)$ and $w_{j+1}=w_j=u$
    on $\partial\Omega$. Further, 
    the smaller  balls $B(x_k',r_k/2)\subset B(x_k,r_k)$ obey
    \begin{equation}\label{eqwdwj1}
      \int_{B(x_k',r_k/2)} W(Dw_{j+1})dx\le
      \int_{B(x_k',r_k/2)} (W^\qc(Du)+\eta) dx
    \end{equation}
and
    \begin{equation}\label{eqconvlp}
      \int_{B(x_k',r_k/2)} |w_{j+1}-u|^pdx\le c \eta^p 
      \int_{B(x_k',r_k/2)} (1+W^\qc(Du))dx\,,
    \end{equation}
    with $w_{j+1}=w_j$ outside these balls.
    Finally we set $\Omega_{j+1}=\Omega_j\setminus  \cup_{k=0}^M
    \overline B(x_k',r_k/2)$, so that $|\Omega_{j+1}|\le (1-2^{-n-1}) |\Omega_j|$,
    and  iterate. We remark that $w_{j+1}=u$ on the open set $\Omega_{j+1}$,
    hence there is no need to redefine $E$, $F$ and $\delta$ at each step.
    This concludes the construction of the sequence $w_j$.

It remains to show that $w_j$, for $j$ sufficiently large, has the desired
properties. Each of these functions coincides with $u$ outside a finite number
of disjoint balls, and has been modified exactly once in each of those balls.
By (\ref{eqconvlp}) we have
\begin{equation*}
  \int_{\Omega} |w_j-u|^pdx \le c \eta^p \int_\Omega (1+W^\qc(Du))dx
\end{equation*}
hence $w_j$ is close to $u$ in $L^p$, independently of $j$.

Analogously from (\ref{eqwdwj1}) we deduce, for the union of the balls
$\Omega\setminus \Omega_j$,
    \begin{equation*}
      \int_{\Omega\setminus\Omega_{j}} W(Dw_{j})dx\le
      \int_{\Omega\setminus\Omega_{j}} (W^\qc(Du)+\eta)dx
    \end{equation*}
which implies
\begin{equation*}
  \int_\Omega W(Dw_j)dx\le \int_{\Omega\setminus\Omega_j} (W^\qc(Du)+\eta)dx
  +\int_{\Omega_j} W(Du)dx\,.
\end{equation*}
Since  $|\Omega_j|\le (1-2^{-n-1})^j|\Omega|\to0$ and by the growth condition
$W(Du)\in L^1(\Omega)$, for
sufficiently large $j$ we have
\begin{equation*}
  \int_\Omega W(Dw_j)dx\le \int_{\Omega} (W^\qc(Du)+2\eta)dx\,,
\end{equation*}
as required.
  \end{proof}

  \begin{lemma}[Quasiconvexity]\label{lemmaqcorientpres}
    The function $W^\qc$ is quasiconvex. 
  \end{lemma}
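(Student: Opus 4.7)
The plan is to verify, for every matrix $F \in \R^{n\times n}$ and every $\varphi \in W^{1,\infty}(B_1;\R^n)$ with $\varphi(x)=Fx$ on $\partial B_1$, that
\begin{equation*}
W^\qc(F) \le \strokedint_{B_1} W^\qc(D\varphi)\, dx\,,
\end{equation*}
which is the Lipschitz form of the quasiconvexity inequality. If the right-hand side is infinite the inequality holds trivially, so I would assume $\int_{B_1} W^\qc(D\varphi)\, dx < \infty$. Then $W^\qc(D\varphi)$ is finite almost everywhere, and since $W^\qc(G)=\infty$ whenever $\det G\le 0$ (as observed in the proof of Theorem \ref{theoorientation}), this forces $\det D\varphi > 0$ almost everywhere.

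Next I would apply Lemma \ref{lemmarecoverydp} to $\varphi$ on the domain $\Omega = B_1$. This produces a sequence $\varphi_j$ with $\varphi_j = \varphi = Fx$ on $\partial B_1$, $\det D\varphi_j > 0$ almost everywhere, and
\begin{equation*}
\limsup_{j\to\infty}\int_{B_1} W(D\varphi_j)\, dx \le \int_{B_1} W^\qc(D\varphi)\, dx\,.
\end{equation*}
Since $\varphi \in W^{1,\infty}(B_1;\R^n)$, the final clause of Lemma \ref{lemmarecoverydp} guarantees that each $\varphi_j$ also lies in $W^{1,\infty}(B_1;\R^n)$, hence is an admissible competitor in the definition (\ref{eqdefwqcdetp}) of $W^\qc(F)$. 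Consequently $W^\qc(F) \le \strokedint_{B_1} W(D\varphi_j)\, dx$ for every $j$, and taking $\limsup$ on the right yields exactly the required inequality.

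The key point I expect to rely on is that the recovery sequence of Lemma \ref{lemmarecoverydp} preserves the Lipschitz regularity of the starting map; this matters because the definition (\ref{eqdefwqcdetp}) restricts admissible test functions to $W^{1,\infty}$. Once this preservation is in hand, the argument is a direct consequence of Lemma \ref{lemmarecoverydp} together with the elementary observation that $W^\qc = \infty$ outside $\R^{n\times n}_+$, so I do not expect any serious obstacle beyond invoking the correct regularity clause of the recovery-sequence lemma.
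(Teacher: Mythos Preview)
Your proposal is correct and follows essentially the same route as the paper: both arguments apply Lemma~\ref{lemmarecoverydp} to the test map on $B_1$, use the Lipschitz-preservation clause to ensure the recovery sequence is admissible in~(\ref{eqdefwqcdetp}), and pass to the limit. Your version is in fact slightly more careful, since you explicitly dispose of the case where the right-hand side is infinite and verify the hypothesis $\det D\varphi>0$ a.e.\ needed to invoke Lemma~\ref{lemmarecoverydp}, whereas the paper restricts at the outset to $\det F>0$ and leaves these checks implicit.
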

  \begin{remark}\label{remarkwqc}
  $W^\qc$ is the largest (extended-valued) quasiconvex function
below $W$, hence in this sense its quasiconvex envelope. This function does not necessarily coincide with the supremum of all finite-valued quasiconvex functions below $W$; in particular, this is not true for the function discussed in \cite[Example 3.5]{BallMurat1984}.
      \end{remark}
  \begin{proof}
    Fix $F$ with $\det F>0$, $\Omega=B_1$, $\psi\in W^{1,\infty}(B_1,\R^n)$
    with $\psi(x)=Fx$ on $\partial B_1$. We need to show that 
    \begin{equation*}
      W^\qc(F)\le \strokedint_{B_1} W^\qc(D\psi)dx\,.
    \end{equation*}
By Lemma \ref{lemmarecoverydp}
    there is a sequence $\varphi_j\in W^{1,\infty}$ 
    with $\varphi_j(x)=\psi(x)=Fx$ on $\partial B_1$ and such that
    \begin{equation*}
      \limsup_{j\to\infty} \int_{B_1} W(D\varphi_j)dx\le \int_{B_1}
      W^\qc(D\psi)dx\,. 
    \end{equation*}
    Since every $\varphi_j$ is admissible in the definition of $W^\qc(F)$ we
    obtain
    \begin{equation*}
      W^\qc(F)\le \strokedint_{B_1} W(D\varphi_j) dx
    \end{equation*}
    for all $j$, and in particular
    \begin{equation*}
      W^\qc(F)\le\strokedint_{B_1}
      W^\qc(D\psi)dx
    \end{equation*}
    as desired.
  \end{proof}

\section{Construction of volume-preserving maps}
 \label{secvolumepres}
In this case the translation is not needed, and correspondingly the proof of Lemma \ref{lemmaconstr1} is simpler than the one of Lemma \ref{lemmaconstr2}; we give it in detail since it illustates in a compact way the key ideas of our constrruction. At the same time the continuity of $W^\qc$ is less clear than for orientation-preserving maps. 
It essentially follows from the results of \cite{MuellerSverak,Conti2008}. Since it was not stated there we briefly show how it can be derived from the construction in \cite{Conti2008}.

\begin{lemma}
Given $W:\Sigma\to[0,\infty)$, extended by $\infty$ elsewhere, we define
$W^\qc$  by (\ref{eqdefwqcdetp}).
  The function $W^\qc$ is rank-one convex and hence continuous on $\Sigma$.
\end{lemma}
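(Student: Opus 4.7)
The plan is to first establish rank-one convexity of $W^\qc$ along admissible segments in $\Sigma$, and then to deduce continuity on $\Sigma$ by a separate-convexity argument in suitable local coordinates.

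For the rank-one convexity, fix $F_0,F_1\in\Sigma$ with $F_1-F_0=a\otimes b$. Since
$\det F_t=\det F_0+t\,\langle\cof F_0,a\otimes b\rangle$
is affine in $t$ and equals $1$ at $t=0,1$, the entire segment $F_t=(1-t)F_0+tF_1$ lies in $\Sigma$ for $t\in[0,1]$. To prove
\begin{equation*}
W^\qc(F_t)\le(1-t)W^\qc(F_0)+tW^\qc(F_1),
\end{equation*}
I would fix $\eta>0$ and use (\ref{eqdefwqcdetp}) to pick $\varphi_i\in W^{1,\infty}(B_1;\R^n)$ with $\varphi_i(x)=F_ix$ on $\partial B_1$ and $\strokedint_{B_1}W(D\varphi_i)\,dx\le W^\qc(F_i)+\eta$; necessarily $\det D\varphi_i=1$ almost everywhere, since $W=\infty$ off $\Sigma$. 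I would then construct a competitor admissible for $W^\qc(F_t)$ by laminating $\varphi_0$ and $\varphi_1$ in thin slabs orthogonal to $b$ with volume fractions $1-t$ and $t$.

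The main obstacle is that the transition layers between adjacent slabs produced by naive lamination generically have determinant different from one, which is forbidden here. This is precisely the difficulty resolved by the construction of \cite{Conti2008}, which builds exactly volume-preserving transitions between two rank-one connected affine gradients with arbitrarily small excess cost and prescribed affine boundary data. Applying that construction slab by slab, and then letting the slab period and $\eta$ tend to zero, yields the rank-one convexity inequality.

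For continuity at $F\in\Sigma$, I would parameterize a neighborhood in $\Sigma$ by the map
\begin{equation*}
\Phi(t_1,\dots,t_N)=F\cdot\prod_{i=1}^N\bigl(I+t_i\,u_i\otimes w_i\bigr),\qquad N=n^2-1,
\end{equation*}
with the rank-one generators chosen so that $u_i\cdot w_i=0$ (so each factor has determinant one and $\Phi$ takes values in $\Sigma$) and so that the family $u_i\otimes w_i$ spans the $(n^2-1)$-dimensional space of traceless matrices; a short computation then shows that $D\Phi(0)$ is an isomorphism onto $T_F\Sigma$, so $\Phi$ is a local diffeomorphism onto a neighborhood of $F$ in $\Sigma$. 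Along each coordinate line $\Phi$ traces an affine rank-one segment inside $\Sigma$, so the rank-one convexity just established makes $W^\qc\circ\Phi$ convex in each variable separately. Since $W^\qc\le W$ on $\Sigma$ and $W$ obeys the growth bound (\ref{eqgrowthwincompr}), the composition $W^\qc\circ\Phi$ is locally bounded, and the standard result for separately convex functions yields local Lipschitz continuity, hence continuity of $W^\qc$ on $\Sigma$.
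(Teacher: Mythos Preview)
Your approach is essentially the same as the paper's: both reduce rank-one convexity to the volume-preserving lamination construction of \cite{Conti2008}, and both deduce continuity from separate convexity in suitable local coordinates on $\Sigma$.

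One point of presentation is worth flagging. You describe the argument as ``laminate $\varphi_0$ and $\varphi_1$ in thin slabs'' and then invoke \cite{Conti2008} to repair the transition layers ``slab by slab''. The result in \cite{Conti2008} does not provide local transition layers between non-affine maps; it provides, for rank-one connected $A,B\in\Sigma$ and $F=\lambda A+(1-\lambda)B$, a global piecewise affine map $u$ on a polyhedron $\Omega$ with $u(x)=Fx$ on $\partial\Omega$, $Du\in K\subset\Sigma$ for a fixed finite set $K$, and $|\{Du\notin\{A,B\}\}|\le\delta|\Omega|$. The paper therefore reverses your order: first apply \cite{Conti2008} to obtain this $u$, and only then insert near-optimal competitors for $W^\qc(A)$ and $W^\qc(B)$ on the simplices where $Du=A$ or $Du=B$ (using that the definition of $W^\qc$ is domain-independent). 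The error term is then controlled by $\delta\max_K W$, and one sends $\delta\to0$ with $K$ fixed. Your sketch can be made rigorous in exactly this way; the ``slab period'' plays no role.

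Your continuity argument via the chart $\Phi(t)=F\prod_i(I+t_iu_i\otimes w_i)$ with $u_i\cdot w_i=0$ is correct and is precisely what is behind the citation the paper gives to \cite[Step~2 in the proof of Th.~3.1]{Conti2008}; you have simply written it out explicitly.
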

\begin{proof}
We first observe that, by general scaling and covering arguments, the definition of $W^\qc$ does not depend on the domain, and in particular
\begin{equation}\label{eqwqcpolyhedron}
  W^\qc(F)=\inf\{\strokedint_{\omega} W(D\varphi)dx: \varphi\in
  W^{1,\infty}(\omega;\R^n), \varphi(x)=Fx \text{ for } x\in \partial \omega \}
\end{equation}
for any bounded open nonempty polyhedron $\omega\subset\R^n$.

To prove  rank-one convexity we fix $A,B\in \Sigma$ with
  $\rank(A-B)=1$ and $\lambda\in(0,1)$. We define $F=\lambda A+  (1-\lambda) B$.
By the construction in \cite[Th. 2.1]{Conti2008} 
(with $n=m=r$, $P=Q=\Id$, $t=1$, $\eps=1$) there is a finite
set $K\subset\Sigma$ such that for any $\delta>0$ one can find 
a polyhedron $\Omega$ and a piecewise affine function $u\in
  W^{1,\infty}(\Omega;\R^n)$ such that $u(x)=Fx$ on $\partial \Omega$,
  $Du \in K\subset\Sigma$ almost everywhere, 
 $|\{Du\not\in\{A,B\}\}|\le \delta|\Omega|$.
 The latter, together with the boundary data, implies
 \begin{equation*}
|\{Du=A\}|\le (\lambda+c\delta)|\Omega| \text{ and }
|\{Du=B\}|\le (1-\lambda+c\delta)|\Omega|\,,
 \end{equation*}
with $c$ depending on $A$ and  $B$.
Further, the set $\{Du=A\}$ is a finite union of simplexes $\omega^A_j$. For
each of 
 them there is, by (\ref{eqwqcpolyhedron}) with $F=A$,  a Lipschitz function $v^A_j$ 
 with $v^A_j=u$ on $\partial \omega^A_j$ and
 \begin{equation*}
   \int_{ \omega^A_j} W(Dv^A_j)dx \le  |\omega^A_j|(  W^\qc(A) + \delta)\,.
 \end{equation*}
The same holds for the set $\{Du=B\}$. We set $w=v^A_j$ on each
$\omega^A_j$,  $w=v^B_j$ on each
$\omega^B_j$,  $w=u$ on the rest. 
Since $w(x)=Fx$ on $\partial\Omega$  we have
\begin{alignat*}1
 W^\qc(F)\le & \strokedint_\Omega W(Dw)dx \le \frac{|\{Du=A\}|}{|\Omega|}(  W^\qc(A) + \delta)\\
& +\frac{|\{Du=B\}|}{|\Omega|}(  W^\qc(B) + \delta)
+\frac{|\{Du\not\in \{A,B\}\}|}{|\Omega|} \max W(K)\\
&\le \lambda W^\qc(A)+(1-\lambda)W^\qc(B) + c\delta \max W(K)\,,
\end{alignat*}
with $c$ depending on $A$ and $B$. 
Taking  $\delta\to0$ (with fixed $K$) this implies  the desired inequality
$W^\qc(F)\le \lambda W^\qc(A)+(1-\lambda) W^\qc(B)$.
Since $W^\qc$ is rank-one convex, it is separately
convex in suitable variables and hence continuous (for details see, e.g., 
  \cite[Step 2 in the proof of Th. 3.1]{Conti2008}).
\end{proof}

\begin{lemma}\label{lemmaconstr1}
  Let 
$W\in C^0(\Sigma;[0,\infty))$ obey
(\ref{eqgrowthwincompr}) for some $p\ge 1$.
  Then for any $F\in \R^{n\times n}$ and $\eta>0$ 
there is $\delta>0$ such that the following holds:
For any ball $B=B(x_0,r)$ and any function  $u\in W^{1,p}(B,\R^n)$
with 
  \begin{equation*}
    \strokedint_B |Du-F|^pdx\le \delta \text{ and } Du\in \Sigma \text{ a.e.}
  \end{equation*}
  one can find $z\in W^{1,p}(B,\R^n)$ with $u=z$ on $\partial B$,
  \begin{equation*}
\strokedint_B W(Dz)dx\le     \strokedint_B (W^\qc(Du)+ \eta)dx  \text{ and } Dz
\in \Sigma \text{ a.e.}
  \end{equation*}
  Additionally,
  \begin{equation*}
    \strokedint_B |u-z|^p dx\le 
    c r^p  \strokedint_B (1+W^\qc(Du))dx\,.
  \end{equation*}
\end{lemma}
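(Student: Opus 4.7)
The plan is to mirror the proof of Lemma~\ref{lemmaconstr2}, using the crucial simplification that $\det Dv = 1$ makes the change of variables trivial, so that no translation device as in Lemma~\ref{lemmachoicex1} is required. I construct a deterministic candidate $z = u \circ v$ with $v$ a volume-preserving bilipschitz map that agrees with the identity on $\partial B$.

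If $\det F \neq 1$ then $W^\qc(F) = \infty$ (any admissible $\varphi$ in~(\ref{eqdefwqcdetp}) would need $\int_{B_1}\det D\varphi\,dx = \det F \cdot |B_1|$ while $\det D\varphi = 1$ a.e.), and $z = u$ suffices. Assume henceforth $F \in \Sigma$. By the scale- and translation-invariance of the infimum (cf.~(\ref{eqwqcpolyhedron})) I fix $\varphi_\eta \in W^{1,\infty}(B;\R^n)$ with $\varphi_\eta(x) = Fx$ on $\partial B$, $\det D\varphi_\eta = 1$ a.e., and $\strokedint_B W(D\varphi_\eta)\,dx \le W^\qc(F) + \eta$; by approximation $\varphi_\eta$ may additionally be taken bilipschitz. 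Setting $v = F^{-1}\varphi_\eta$ and $z = u \circ v$, I have $v = \Id$ on $\partial B$, $\det Dv = 1$ a.e., $v(B) \subset \overline B$ by \cite[Theorem 1]{Ball1981}, and Lemma~\ref{lemmachainrule} yields $z \in W^{1,p}(B;\R^n)$ with $Dz = (Du \circ v)\,Dv$ a.e., $\det Dz = 1$ a.e., and $z = u$ on $\partial B$.

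The bilipschitz change of variables with unit Jacobian gives
\begin{equation*}
\int_B g(v(x))\,dx \le \int_B g(y)\,dy\quad\text{for every measurable }g \ge 0,
\end{equation*}
since $v(B) \subset \overline B$. I then decompose
\begin{equation*}
\int_B\bigl[W(Dz) - W^\qc(Du)\bigr]\,dx = T_1 + T_2 + T_3,
\end{equation*}
with $T_1 = \int_B[W(Dz) - W(D\varphi_\eta)]\,dx$, $T_2 = \int_B[W(D\varphi_\eta) - W^\qc(F)]\,dx$, and $T_3 = \int_B[W^\qc(F) - W^\qc(Du)]\,dx$. The bound $T_2 \le \eta|B|$ is immediate. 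For $T_3$, the continuity of $W^\qc$ on $\Sigma$ proved just above gives $|W^\qc(Du) - W^\qc(F)| \le \eta$ on $\{|Du - F| \le \eps\}$ for some $\eps = \eps(\eta)$, and Chebyshev's inequality bounds the complement by $|B|\delta/\eps^p$.

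For $T_1$, split $B$ into $B \setminus \omega$ and $\omega = \{|(Du - F) \circ v| \ge \eps\}$. On $B \setminus \omega$ the identity $Dz - D\varphi_\eta = ((Du - F) \circ v)\,Dv$ yields $|Dz - D\varphi_\eta| \le \eps\|Dv\|_{L^\infty}$; since $Dz, D\varphi_\eta \in \Sigma$ are uniformly bounded, the uniform continuity of $W$ on the relevant compact subset of $\Sigma$ gives $|W(Dz) - W(D\varphi_\eta)| \le \eta$. On $\omega$ the growth condition~(\ref{eqgrowthwincompr}) produces $W(Dz) + W(D\varphi_\eta) \le C_\eta(|Du|^p \circ v + 1)$; applying the change-of-variables identity above and the elementary estimate $|Du|^p \le c_\eps|Du - F|^p$ on $\{|Du - F| \ge \eps\}$ yields $\int_\omega [W(Dz) + W(D\varphi_\eta)]\,dx \le c_\eta\delta|B|$. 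Summing all contributions produces $\int_B[W(Dz) - W^\qc(Du)]\,dx \le (C\eta + c_\eta\delta)|B|$ with $C$ universal, and choosing $\delta = \delta(\eta, F)$ small enough finishes the energy bound. The $L^p$ estimate then follows from Poincaré applied to $u - z \in W^{1,p}_0(B)$ together with the growth bound. The main technical point is the clean change of variables, which I secure via the bilipschitz choice of $\varphi_\eta$; alternatively a degree argument showing that the multiplicity of $v$ equals $\chi_B$ a.e.\ (since $v = \Id$ on $\partial B$ and $\det Dv > 0$ a.e.) would remove that requirement.
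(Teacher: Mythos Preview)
Your argument is essentially the same as the paper's, with the same decomposition $T_1+T_2+T_3$ and the same treatment of each term. The one point where you hedge---``by approximation $\varphi_\eta$ may additionally be taken bilipschitz''---is unnecessary and in fact the weakest link in your write-up, since approximating volume-preserving Lipschitz maps by bilipschitz volume-preserving maps is not an off-the-shelf fact. The paper avoids this entirely: since $v=F^{-1}\varphi_\eta$ is Lipschitz, equals the identity on $\partial B$, and has $\det Dv=1$ a.e., \cite[Theorem~2]{Ball1981} gives directly that $v$ is a bilipschitz map of $B$ onto itself (the inverse is Lipschitz because $|(Dv)^{-1}|=|\cof Dv|\le c\|Dv\|_\infty^{n-1}$). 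Your degree-theory alternative is precisely what underlies that theorem, so you had the right idea; just cite Ball and drop the approximation step. With that, the change of variables $\int_B g\circ v\,dx=\int_B g\,dy$ is exact (not just an inequality), the chain rule is the standard one for composition with a bilipschitz map (no need for Lemma~\ref{lemmachainrule}, which is tailored to the translated, non-invertible situation of Section~\ref{secorientationpres}), and the rest of your proof matches the paper line by line.
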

\begin{proof}
  Let $\varphi_\eta\in W^{1,\infty}(B,\R^n)$ be such that $\varphi_\eta(x)=Fx$ on
  $\partial B$ and
  \begin{equation*}
    \strokedint_B W(D\varphi_\eta)dx \le  W^\qc(F) + \eta\,.    
  \end{equation*}
  We define
  \begin{equation*}
    v=F^{-1}\varphi_\eta
  \end{equation*}
  and observe that, by \cite[Theorem 2]{Ball1981},
$v$ is a bilipschitz
  map from $B$ onto itself. Therefore we can define
  \begin{equation*}
    z=u\circ v \in W^{1,p}(B,\R^n)
  \end{equation*}
  and compute its gradient
  \begin{equation*}
    Dz = Du \circ v Dv = (Du-F)\circ v Dv + D\varphi_\eta\,.
  \end{equation*}
  We set $R_\eta=\|Dv\|_\infty$, $M_\eta=\|D\varphi_\eta\|_\infty$ and choose $\varepsilon\in(0,1)$ such that
  \begin{equation}\label{eqcontinc1}
    |W(\xi)-W(\zeta)|\le \eta \text{ whenever }  |\zeta|\le M_\eta\text{ and } |\xi-\zeta|\le
    \varepsilon R_\eta
  \end{equation}
  and
  \begin{equation}\label{eqcontinc2}
    |W^\qc(\xi)-W^\qc(F)|\le \eta \text{ whenever }  |\xi-F|\le   \varepsilon \,. 
  \end{equation}

In order to estimate the integral
  \begin{alignat}1\nonumber
    \int_B (W(Dz)-W^\qc(Du))dx
=&    \int_B (W(Dz)-W(D\varphi_\eta))dx\\ \nonumber
&+\int_B(W(D\varphi_\eta)-W^\qc(F))dx \\
&+\int_B(W^\qc(F)-W^\qc(Du))dx
\label{eqedzdfincomprs}  
  \end{alignat}
we consider the three terms separately.
  The second integral in (\ref{eqedzdfincomprs}) is bounded by $\eta |B|$ by the definition of
  $\varphi_\eta$.
In order to estimate the last integral in (\ref{eqedzdfincomprs}) we use
(\ref{eqcontinc2}) to obtain
  \begin{equation*}
    W^\qc(F)\le W^\qc(Du)+ \eta 
    \hskip1cm\text{ on the set where $|Du-F|\le \eps$}  \,.
  \end{equation*}
  The complement is small and gives a small contribution. Precisely,
  \begin{alignat*}1
    \int_{B} (W^\qc(F)-W^\qc(Du))dx &\le \eta |B| + W^\qc(F)
    \calL^n(|Du-F|>\eps)\\
&    \le \eta |B| +  W^\qc(F) \frac{1}{\eps^p}  |B| \delta  \,.
  \end{alignat*}
In order to estimate the  first integral in (\ref{eqedzdfincomprs}) we
distinguish the set
 $\omega=\{|Du-F|\circ v >\varepsilon\}$ and the rest. On $B\setminus\omega$, 
from the explicit expression for $Dz$ we obtain
\begin{equation*}
  |Dz-D\varphi_\eta| \le \|Dv\|_\infty |Du-F|\circ v \le \eps R_\eta
\end{equation*}
and recalling (\ref{eqcontinc1}) we can estimate 
  \begin{equation*}
    |W(Dz)-W(D\varphi_\eta)| \le \eta \text{ on } B\setminus \omega\,.
  \end{equation*}
  Since $|Du-F|\ge \eps$ implies
  \begin{equation*}
    |Du|+1\le |Du-F|+|F|+1 \le \left( \frac{|F|+1}{\eps}+1\right) |Du-F|\,,
  \end{equation*}
the contribution of $\omega$ can be estimated by
  \begin{alignat*}1
    \int_\omega W(Dz)dx \le
    c\int_{\omega} (R_\eta^p |Du|^p\circ v +1) dx
\le        &  c_{F,\eps} \int_{\omega} |Du-F|^p\circ v \, dx
  \end{alignat*}
  where the constant depends on $F$, $\eta$ and $\eps$. Finally, 
$v$ is a bilipschitz
  map from $B$ onto itself with $\det Dv=1$ almost everywhere
(see \cite[Theorem 2]{Ball1981}) and therefore
  \begin{alignat*}1
    \int_\omega W(Dz)dx\le
        &  c_{F,\eps} \int_{B} |Du-F|^p\circ v  \, dx
=          c_{F,\eps} \int_{B} |Du-F|^p dx\le c_{F,\eps} \delta |B|\,.
  \end{alignat*}
Collecting terms we conclude
\begin{equation*}
  \strokedint_B (W(Dz)-W^\qc(Du))dx\le 3\eta +
 \frac{W^\qc(F)}{\eps^p}\delta
+      c_{F,\eps}\delta\,.
\end{equation*}
Since $\eps$ depends on $\eta$ and $F$ but not on $\delta$ and $u$, choosing $\delta$ sufficiently small the
proof is concluded. The $L^p$ estimate follows from the growth estimate and Poincar\'e's inequality.
  \end{proof}

  \begin{lemma}[Recovery sequence]\label{lemmarecoveryincompr}
    Let $\Omega\subset\R^n$ open, Lipschitz, bounded, $u\in
    W^{1,p}(\Omega;\R^n)$ with $Du\in \Sigma$ almost everywhere. Then there is
    a sequence $u_j\rightharpoonup u$ in $W^{1,p}$ such that $\det Du_j\in \Sigma$
    almost everywhere and
    \begin{equation*} 
      \limsup_{j\to\infty}\int W(Du_j)dx\le \int W^\qc(Du)dx\,.      
    \end{equation*}
  If additionally $u\in W^{1,\infty}$ then also $u_j\in
    W^{1,\infty}$. 
  \end{lemma}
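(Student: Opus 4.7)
The plan is to follow the pattern of Lemma \ref{lemmarecoverydp}, with Lemma \ref{lemmaconstr1} playing the role of Lemma \ref{lemmaconstr2}. Since in the incompressible local construction the modified map $z$ already equals $u$ on all of $\partial B$ (not only outside an inner subball), the iteration will be slightly simpler: no translation parameter needs to be chosen, and there is no shrinking to an inner half-ball.

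First I would fix $\eta>0$ and reduce the problem to producing a single $w\in W^{1,p}(\Omega;\R^n)$ with $Dw\in\Sigma$ a.e., $w=u$ on $\partial\Omega$, $\|w-u\|_{L^p}\le\eta$, and $\int_\Omega W(Dw)\,dx\le\int_\Omega W^\qc(Du)\,dx+\eta$; the sequence $(u_j)$ is then obtained by letting $\eta\downarrow 0$. If $\int_\Omega W^\qc(Du)\,dx=\infty$ the constant sequence works. Otherwise, the growth condition (\ref{eqgrowthwincompr}) together with convexity of $|\cdot|^p$ in the definition of $W^\qc$ gives $W^\qc(F)\ge|F|^p/c-c$, so $|Du|^p\in L^1$. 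Let $E\subset\Omega$ denote the set of Lebesgue points of $Du$; at each $x\in E$ I set $F(x)=Du(x)\in\Sigma$ and let $\delta(x)>0$ be the threshold furnished by Lemma \ref{lemmaconstr1} applied to this $F$ and to $\eta$.

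The core of the argument is an iteration identical to the one in the proof of Lemma \ref{lemmarecoverydp}. Starting from $w_0=u$, $\Omega_0=\Omega$, at step $j$ I cover $E\cap\Omega_j$ by balls $B(x,r)$ with $r<\eta$, $\overline{B(x,r)}\subset\Omega_j$, on which
\begin{equation*}
\strokedint_{B(x,r)}|Dw_j-F(x)|^p\,dy\le\delta(x),
\end{equation*}
which is possible because $w_j=u$ on the open set $\Omega_j$. A Vitali extraction then produces a finite disjoint subfamily $B(x_k,r_k)$, $k=0,\dots,M$, whose union has at least half the measure of $\Omega_j$. Applying Lemma \ref{lemmaconstr1} on each such ball produces $w_{j+1}$ with $w_{j+1}=w_j$ outside, $Dw_{j+1}\in\Sigma$ a.e., $w_{j+1}=u$ on $\partial\Omega$, and on each ball
\begin{equation*}
\int_{B(x_k,r_k)}W(Dw_{j+1})\,dx\le\int_{B(x_k,r_k)}(W^\qc(Du)+\eta)\,dx
\end{equation*}
together with the corresponding $L^p$-bound $\int_{B(x_k,r_k)}|w_{j+1}-u|^p\,dx\le c\eta^p\int_{B(x_k,r_k)}(1+W^\qc(Du))\,dx$. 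Setting $\Omega_{j+1}=\Omega_j\setminus\bigcup_k\overline{B(x_k,r_k)}$ gives $|\Omega_j|\le(1-2^{-n-1})^j|\Omega|\to 0$.

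To finish, since each point of $\Omega$ is modified at most once and across disjoint balls, summing the local estimates yields $\|w_j-u\|_{L^p}^p\le c\eta^p\int_\Omega(1+W^\qc(Du))\,dx$ uniformly in $j$; the growth bound together with $W(Du)\in L^1(\Omega)$ and $|\Omega_j|\to 0$ then give $\int_\Omega W(Dw_j)\,dx\le\int_\Omega W^\qc(Du)\,dx+2\eta$ for $j$ sufficiently large. Coercivity of $W$ in $|F|^p$ bounds $\|Dw_j\|_{L^p}$, and combined with the $L^p$ closeness this yields the weak convergence $u_j\rightharpoonup u$ in $W^{1,p}$ as $\eta\downarrow 0$. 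Preservation of Lipschitz regularity is immediate, since the local building block $z=u\circ v$ is the composition of $u$ with a bilipschitz map. I expect no genuine obstacle beyond bookkeeping: all nontrivial content is already contained in Lemma \ref{lemmaconstr1}, and the passage from the local construction to a global recovery sequence is essentially the Vitali/iteration argument of Lemma \ref{lemmarecoverydp}.
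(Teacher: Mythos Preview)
Your proposal is correct and follows exactly the approach indicated by the paper, which simply states that the proof is the same as that of Lemma~\ref{lemmarecoverydp} with Lemma~\ref{lemmaconstr1} in place of Lemma~\ref{lemmaconstr2}. You have in fact spelled out the details more explicitly than the paper does, and correctly observed the minor simplification that in the incompressible case the local modification acts on the whole ball $B(x_k,r_k)$ rather than on an inner half-ball, so no translation is needed and $\Omega_{j+1}$ is obtained by removing the full balls (in particular $|\Omega_{j+1}|\le\frac12|\Omega_j|$, which is even better than the factor $1-2^{-n-1}$ you wrote).
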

  \begin{proof}
The proof is just like the one in Lemma \ref{lemmarecoverydp}, for brevity we do not repeat it.
  \end{proof}

  \begin{lemma}[Quasiconvexity]
    The function $W^\qc$ is quasiconvex.    
  \end{lemma}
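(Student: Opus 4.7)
The plan is to mimic the argument of Lemma \ref{lemmaqcorientpres} verbatim, now using the incompressible recovery sequence from Lemma \ref{lemmarecoveryincompr} in place of Lemma \ref{lemmarecoverydp}. Fix $F\in\Sigma$ and a test map $\psi\in W^{1,\infty}(B_1;\R^n)$ with $\psi(x)=Fx$ on $\partial B_1$ and $D\psi\in\Sigma$ almost everywhere (if no such $\psi$ exists, the quasiconvexity inequality at $F$ is vacuous, and if $D\psi\not\in\Sigma$ on a set of positive measure then $\int W^\qc(D\psi)\,dx=\infty$ and the inequality is trivial). I must show
\begin{equation*}
W^\qc(F)\le \strokedint_{B_1} W^\qc(D\psi)\,dx.
\end{equation*}

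First, I would apply Lemma \ref{lemmarecoveryincompr} to $\psi$ on $\Omega=B_1$ to produce a sequence $\varphi_j\in W^{1,\infty}(B_1;\R^n)$ with $D\varphi_j\in\Sigma$ almost everywhere, $\varphi_j-\psi\in W^{1,p}_0$ (so that $\varphi_j(x)=Fx$ on $\partial B_1$), and
\begin{equation*}
\limsup_{j\to\infty}\int_{B_1} W(D\varphi_j)\,dx\le \int_{B_1} W^\qc(D\psi)\,dx.
\end{equation*}
Here I should remark that although Lemma \ref{lemmarecoveryincompr} is stated for $u\in W^{1,p}$, the conclusion in the Lipschitz case is explicitly included, so each $\varphi_j$ is indeed Lipschitz and admissible in the definition (\ref{eqdefwqcdetp}) of $W^\qc(F)$.

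Next, since each $\varphi_j$ satisfies all the conditions in (\ref{eqdefwqcdetp}) for the matrix $F$, by definition
\begin{equation*}
W^\qc(F)\le \strokedint_{B_1} W(D\varphi_j)\,dx
\end{equation*}
for every $j$. Taking the limit inferior and combining with the previous estimate yields the desired inequality. The resulting bound is exactly the quasiconvexity of $W^\qc$ on $\Sigma$; outside $\Sigma$ we have $W^\qc=\infty$, so the inequality holds trivially there as well.

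There is essentially no obstacle beyond making sure the recovery sequence is admissible. The only subtle point is the boundary behavior: Lemma \ref{lemmarecoveryincompr} does not explicitly state that $u_j=u$ on $\partial\Omega$, but its proof is declared to parallel that of Lemma \ref{lemmarecoverydp}, which does produce sequences agreeing with $u$ on $\partial\Omega$. I would simply note this and invoke it; no further estimate is needed.
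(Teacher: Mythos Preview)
Your proposal is correct and follows exactly the approach the paper intends: the paper's own proof simply states that it is ``just like the one of Lemma \ref{lemmaqcorientpres}'', and your argument is precisely that adaptation, applying Lemma \ref{lemmarecoveryincompr} to a Lipschitz test map $\psi$ with affine boundary data, using that the resulting $\varphi_j$ are Lipschitz and hence admissible in (\ref{eqdefwqcdetp}), and passing to the limit. Your remarks about the Lipschitz conclusion and the boundary condition are appropriate and match what the paper implicitly relies on.
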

  \begin{proof}
The proof is just like the one of Lemma \ref{lemmaqcorientpres}, for brevity we do not repeat it.
  \end{proof}

\appendix
\section{Composition of Sobolev functions  with Lipschitz functions}
\label{secappchainrule}
The composition of a Lipschitz with a Sobolev function and the composition of a Sobolev with a bilipschitz function are standard.
Although there is a substantial literature on the subject, see for example \cite{GoldsteinReshetnyak1990,LeoniMorini2007} and references therein, we have been unable to find the statement needed here on the composition of a Sobolev with a Lipschitz function, hence we give a short self-contained proof.
To see the difficulty with measurability one can consider the example $\psi(x_1,x_2)=(x_1,0)$, 
$f(x_1,x_2)=h(x_1)\chi_{\{0\}}(x_2)$, with $h:\R\to\R$ not measurable. Then $f=0$ $\calL^2$-almost everywhere but $f\circ\psi$ is not measurable.  To see the difficulty with integrability one can consider $f(x)=|x|^{-1/2}$ in the unit ball of $\R^2$, with $\psi(x)=|x|x$ around the origin. Then $f$ is in $W^{1,1}$ but $(f\circ \psi)(x)=|x|^{-1}$ is not.
\def\gpsi{g}
\begin{lemma}\label{lemmacont}
  Let $\psi\in W^{1,\infty}(B_1;\overline B_1)$, 
$f_k\in L^1(B_2;\R^m)$ with   $\sum_k\|f_k\|_{L^1(B_2)}<\infty$.
Then the maps
$(x,\xone)\mapsto f_{k}(\xone+\psi(x-\xone))$ are $\calL^{2n}$ measurable and 
for almost all  $\xone\in B_1$ 
 the functions
\begin{equation*}
z_{k}(x)=f_{k}(\xone+\psi(x-\xone))
\end{equation*}
are in $L^1(B(\xone,1))$ with $\sum_k \|z_{k}\|_{L^1(B(\xone,1))}<\infty$.
\end{lemma}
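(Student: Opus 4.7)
The plan is to combine a Fubini-type change of variables with a continuous approximation. First I would reduce to a single $f\in L^1(B_2;\R^m)$ and establish the basic bound
\begin{equation*}
  \int_{B_1}\int_{B(\xone,1)}|f(\xone+\psi(x-\xone))|\,dx\,d\xone \le |B_1|\,\|f\|_{L^1(B_2)}\,.
\end{equation*}
The verification is routine: for fixed $\xone$ substitute $y=x-\xone$ to turn the inner integral into $\int_{B_1}|f(\xone+\psi(y))|\,dy$, then interchange the order of integration by Tonelli. For each fixed $y\in B_1$ the vector $\psi(y)\in\overline{B_1}$ is a constant shift sending $B_1$ into $B_2$, so the $\xone$-integral of $|f(\xone+\psi(y))|$ over $B_1$ is bounded by $\|f\|_{L^1(B_2)}$; integrating in $y$ yields the claim.

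The main obstacle is measurability, since the composition of a general Lebesgue representative with a Lipschitz map need not be Lebesgue measurable (the warning examples in the appendix preamble illustrate exactly this). To handle this I would fix Borel representatives of the $f_k$ and approximate each of them in $L^1(B_2)$ by continuous functions $f_k^{(n)}\in C^0(\overline{B_2};\R^m)$. Since $\Phi(x,\xone):=\xone+\psi(x-\xone)$ is Lipschitz on $B_1\times B_1$, the composition $F_k^{(n)}:=f_k^{(n)}\circ\Phi$ is continuous and hence $\calL^{2n}$-measurable. Applying the displayed bound to $f_k^{(n)}-f_k^{(n')}$ makes $\{F_k^{(n)}\}_n$ Cauchy in $L^1(B_1\times B_1)$, so its limit $F_k$ is $\calL^{2n}$-measurable; applying the same bound to $\chi_N$ for a Borel null set $N\subset B_2$ shows that this limit is independent of the chosen Borel representative of $f_k$ up to an $\calL^{2n}$-null set, giving a canonical measurable version of $f_k\circ\Phi$.

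Finally I would assemble the summable integrability by applying the bound to each $|f_k|$ and summing, using Tonelli to exchange the sum with the outer integral:
\begin{equation*}
  \int_{B_1}\sum_k\int_{B(\xone,1)}|f_k(\xone+\psi(x-\xone))|\,dx\,d\xone\le |B_1|\sum_k\|f_k\|_{L^1(B_2)}<\infty\,.
\end{equation*}
The inner double sum/integral is therefore finite for $\calL^n$-a.e.\ $\xone\in B_1$, which is exactly the statement that for such $\xone$ each $z_k$ lies in $L^1(B(\xone,1))$ and $\sum_k\|z_k\|_{L^1(B(\xone,1))}<\infty$.
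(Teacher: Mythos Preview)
Your proof is correct and rests on the same core observation as the paper's: the translation structure of $\Phi(x,\xone)=\xone+\psi(x-\xone)$ lets Fubini show that $\Phi$ pulls $\calL^n$-null sets back to $\calL^{2n}$-null sets, and this drives both measurability and the $L^1$ bound. The packaging differs. The paper works with $g(x,y)=x+\psi(y)$ and argues measurability directly: for open $A$ it writes $f_k^{-1}(A)=E\setminus N$ with $E$ Borel and $N$ null, notes that $g^{-1}(E)$ is Borel by continuity of $g$, and uses the slice computation to show $g^{-1}(N)$ is $\calL^{2n}$-null. You instead fix Borel representatives and pass through a $C^0$ approximation to produce an $L^1$-Cauchy sequence. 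This is valid, but once a Borel representative is fixed the composition $f_k\circ\Phi$ is already Borel measurable (continuous preimage of a Borel set is Borel), so Tonelli applies immediately and the continuous approximation is redundant; your $\chi_N$ argument alone---which is precisely the paper's null-set computation written in integral form---already gives independence of the representative. The final summation step via Tonelli is identical to the paper's.
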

\begin{proof}
We define the continuous function $\gpsi:B_1\times B_1\to B_2$  by $\gpsi(x,y)=x+\psi(y)$ and show that for any $k$ the function $f_k\circ \gpsi$ is $\calL^{2n}$-measurable. 

Let $A\subset\R^m$ be open. Then $f_k^{-1}(A)\subset B_2$ is $\calL^n$-measurable, therefore $f_k^{-1}(A)=E\setminus N$, with $E$ Borel and $N$ a null set. 
Since $\gpsi$ is continuous, $\gpsi^{-1}(E)$ is Borel. It remains to show that $|N|=0$ implies $\gpsi^{-1}(N)=0$.  
Let $F\subset B_2$ be Borel with $N\subset F$ and $|F|=0$. 
Then $\gpsi^{-1}(F)$ is Borel and $\calL^{2n}$-measurable. For any $y\in\R^n$, the set $T_y=\{x\in \R^n: \gpsi(x,y)\in F\}=F-\psi(y)$ is a $\calL^n$-null set. By Fubini's theorem
\begin{equation*}
  \calL^{2n}(\gpsi^{-1}(F))=\int_{\R^n} \calL^n(T_y) dy = 0\,.
\end{equation*}
Therefore $f_k\circ \gpsi$ is measurable.
A second application of  Fubini's theorem shows that for almost all $\xone\in B_1$ each function $x\mapsto f_k(\gpsi(\xone,x))$ is measurable; clearly 
 the same holds for the translations 
$z_k(x)=f_k(\gpsi(\xone,x-\xone))$. We conclude that for almost all $\xone\in B_1$ all the functions $z_k$ are $\calL^n$-measurable.

We define $A:B_1\to[0,\infty]$ by
\begin{alignat*}1
  A(\xone)&=\sum_{k\in\N}   \|z_k\|_{L^1(B(\xone,1))}
=\sum_{k\in\N} \int_{B(\xone,1)}  |f_k|(\xone+\psi(x-\xone)) dx \\
&=\sum_{k\in\N} \int_{B_1}  |f_k|(\xone+\psi(x')) dx'\,.
\end{alignat*}
The integrand is nonnegative and measurable, hence we can interchange 
 the order of summation and integration. Since the integrand is measurable as a function on $\R^{2n}$, the function $A$ is measurable.
Integrating and changing variables as usual,
\begin{equation*}
  \int_{B_1} A(\xone) d\xone \le \sum_{k\in\N} \int_{B_1}  \|f_k\|_{L^1(B_2)} dx' 
  \le  |B_1| \sum_{k\in \N}\|f_k\|_{L^1(B_2)}<\infty \,.
\end{equation*}
Therefore  $A(\xone)<\infty$ almost everywhere, which concludes the proof.
\end{proof}

\begin{lemma}[Chain rule]\label{lemmachainrule}
  Let $\psi\in W^{1,\infty}(B_1;\overline B_1)$, $u\in W^{1,1}(B_2;\R^m)$.
Then for almost all  $\xone\in B_1$ 
 the function $w(x)=u(\xone+\psi(x-\xone))$ belongs to $W^{1,1}(B(\xone,1);\R^m)$ with
\begin{equation*}
  Dw(x)=Du(\xone+\psi(x-\xone))D\psi(x-\xone)\,.
\end{equation*}
If $\psi(x)=x$ on $\partial B_1$ then $w=u$ (as traces) on $\partial B(\xone,1)$.
\end{lemma}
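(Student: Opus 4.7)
The plan is to prove this chain rule by approximating $u$ with smooth functions and transferring the classical chain rule to the limit via Lemma~\ref{lemmacont}. First I would choose $u_k\in C^\infty(\overline{B_2};\R^m)$ with $\|u_k-u\|_{W^{1,1}(B_2)}\le 2^{-k}$, so that the combined sequence consisting of the components of $u_k-u$ and of $Du_k-Du$ is summable in $L^1(B_2)$. Applying Lemma~\ref{lemmacont} to this combined sequence yields a null set $N\subset B_1$ such that for every $\xone\in B_1\setminus N$ both $(u_k-u)(\xone+\psi(\cdot-\xone))$ and $(Du_k-Du)(\xone+\psi(\cdot-\xone))$ belong to $L^1(B(\xone,1))$ with summable norms. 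Setting $w_k(x)=u_k(\xone+\psi(x-\xone))$ and $w(x)=u(\xone+\psi(x-\xone))$, we therefore obtain $w_k\to w$ in $L^1(B(\xone,1))$.

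Next, for $\xone\notin N$ the smoothness of $u_k$ combined with the Lipschitz regularity of $\psi$ gives, by the classical chain rule (using Rademacher's theorem for the almost-everywhere differentiability of $\psi$), that $w_k\in W^{1,\infty}(B(\xone,1);\R^m)$ with
\begin{equation*}
Dw_k(x)=Du_k(\xone+\psi(x-\xone))\,D\psi(x-\xone) \quad \text{a.e.}
\end{equation*}
Since $\|D\psi\|_{L^\infty}<\infty$, the $L^1$ convergence of the compositions $Du_k\circ(\xone+\psi(\cdot-\xone))$ established in the previous step implies that $Dw_k$ converges in $L^1(B(\xone,1))$ to $v(x)=Du(\xone+\psi(x-\xone))\,D\psi(x-\xone)$. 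Closedness of the weak derivative with respect to $L^1$ convergence then yields $w\in W^{1,1}(B(\xone,1);\R^m)$ with $Dw=v$ almost everywhere.

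For the boundary assertion, when $\psi(x)=x$ on $\partial B_1$ any point $x\in\partial B(\xone,1)$ satisfies $x-\xone\in\partial B_1$, so $\xone+\psi(x-\xone)=x$ and $w_k=u_k$ pointwise on $\partial B(\xone,1)$; continuity of the trace from $W^{1,1}$ into $L^1$ of the boundary, combined with the just-established $W^{1,1}$ convergences $w_k\to w$ in $B(\xone,1)$ and $u_k\to u$ in $B_2$, gives $w=u$ as traces on $\partial B(\xone,1)$. The main obstacle is measurability itself: as the example preceding Lemma~\ref{lemmacont} shows, for a single choice of $\xone$ the composition $u\circ(\xone+\psi(\cdot-\xone))$ can fail to be measurable when $\psi$ violates the Lusin $(N)$ condition, so one cannot simply invoke a classical composition theorem. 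Lemma~\ref{lemmacont} bypasses this by averaging over the translation parameter, producing a generic $\xone$ for which both the smooth approximants and their limits behave well under composition; once such an $\xone$ has been selected the remainder is a routine density argument.
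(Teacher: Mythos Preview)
Your proposal is correct and follows essentially the same approach as the paper: approximate $u$ by smooth $u_k$, apply Lemma~\ref{lemmacont} to the differences $(u_k-u,Du_k-Du)$ to secure measurability and $L^1$ control of the compositions for a generic $\xone$, and then pass the classical chain rule for $w_k$ to the limit by closedness of the weak derivative. The only noteworthy difference is the boundary assertion: the paper argues by extending $\psi$ to the identity outside $B_1$ and $u$ to $W^{1,1}(\R^n)$ and working on a larger ball, whereas you use the pointwise equality $w_k=u_k$ on $\partial B(\xone,1)$ together with continuity of the trace operator and the $W^{1,1}$ convergences $w_k\to w$, $u_k\to u$; both routes are valid.
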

\begin{proof}
We choose a sequence  $u_k\in C^\infty(\overline{B_2};\R^m)$ such that $\|u_k-u\|_{W^{1,1}(B_2)}\le 2^{-k}$ and
 apply  Lemma \ref{lemmacont} to the sequence 
 $f_k=(u_k-u,Du_k-Du)\in L^1(B_2;\R^{m}\times \R^{m\times n})$,
 which obeys $\sum \|f_k\|_{L^1}\le 2$. 
For any fixed $\xone$ not in the null set given by the lemma,
we obtain the corresponding sequence  $z_k$ with the properties asserted in
Lemma \ref{lemmacont}. Additionally
we define $w_k$ by $w_k(x)=u_k(\xone+\psi(x-\xone))$ and $w$ as in the statement.
Each of the functions $z_k$ with values in $\R^m\times \R^{m\times n}$ is measurable,
therefore the  first $m$ components which are given by $w_k-w$ are measurable.
The continuity of 
 $w_k$ implies the measurability of $w$. Furthermore, 
\begin{alignat*}1
\|w_k-w\|_{L^1(B(\xone,1))} &= \int_{B(\xone,1)} |u_k-u|(\xone+\psi(x-\xone)) dx\\
&  \le \int_{B(\xone,1)} |f_k|(\xone+\psi(x-\xone)) dx =\|z_k\|_{L^1(B(\xone,1))}\to0\,. 
\end{alignat*}
We conclude $w\in L^1$ and $w_k\to w$ in $L^1$.  

We now repeat the procedure for the gradient.
We denote by $F(x)=Du(\xone+\psi(x-\xone))D\psi(x-\xone)$  the expression given in the statement. 
 Since every $u_k$ is smooth  by the usual chain rule we obtain
 \begin{equation*}
     Dw_k(x) = Du_k(\xone+\psi(x-\xone))D\psi(x-\xone)\,,
 \end{equation*}
which is the product of a continuous and an $L^\infty$ function and therefore measurable. Further, 
\begin{equation*}
  (Dw_k-F)(x) = (Du_k-Du)(\xone+\psi(x-\xone))D\psi(x-\xone)\,.
\end{equation*}
The first factor is the second component of $z_k$ hence measurable by Lemma \ref{lemmacont}, the second belongs to $L^\infty$. Continuity of $Dw_k$  gives measurability of  $F$. Further,
\begin{alignat*}1
  \|Dw_k-F\|_{L^1(B(\xone,1))} &\le \|D\psi\|_\infty \int_{B(\xone,1)} |Du_k-Du|(\xone+\psi(x-\xone)) dx\\
&  \le \|D\psi\|_\infty\int_{B(\xone,1)} |f_k|(\xone+\psi(x-\xone)) dx\to0 \,.
\end{alignat*}
Therefore $F\in L^1$ and $Dw_k\to F$ in $L^1$. Continuity of the distributional derivative implies  $F=Dw$ distributionally and $w\in W^{1,1}$.  

To obtain the condition on the trace it suffices to extend $\psi$ to be the identity outside $B_1$, $u$ to a function in $W^{1,1}(\R^n;\R^m)$ and work on a larger ball.
\end{proof}

\section{Construction for submultiplicative integrands}
\label{sectmultipl}
We show here how our construction of the recovery sequence can be extended to the more general situation discussed in Remark \ref{remarkgener}. We focus on the orientation-preserving case, the other one is simpler. For brevity we only show how the basic construction step is modified, the covering of Lemma \ref{lemmarecoverydp} is not significantly changed. Indeed, it suffices to use $p=1$ and takes Lebesgue points of $Du$ and $W(Du)$ instead of Lebesgue points of $Du$ and  $\theta(\det Du)$; $W(Du)\in L^1$ by the growth condition (\ref{eqwprod2}).  
\begin{lemma}\label{lemmaconstr2prod}
Assume that  $W\in
C^0(\R^{n\times n}_+,[0,\infty))$ satisfies 
\begin{equation}\label{eqwgrowthproduct1}
\frac1c|F|-c\le  W(G)
\end{equation}
and
\begin{equation}\label{eqwgrowthproduct}
  W(FG)\le c_W (1+W(F))(1+W(G)) 
\end{equation}
for all $F,G\in \R^{n\times n}_+$, with a fixed $c_W>0$.
 Then for any
   $F\in \R^{n\times n}_+$ and $\eta>0$ there is $\delta>0$ such that for any
   $B=B(x_0,r)$ and $u\in W^{1,1}(B,\R^n)$ with 
  \begin{equation*}
    \strokedint_B ( |Du-F|+|W(Du)-W(F)|)\, dx\le \delta \text{ and } \det Du>0 \text{ a.e.}
  \end{equation*}
  there are $\xone\in B(x_0,r/2)$ 
  and $z\in W^{1,1}(B,\R^n)$ with
$\det Dz>0$ a.e.,
 $z=u$ on $B(x_0,r)\setminus B(\xone,r/2)$ and
  \begin{equation*}
    \int_{B(\xone,r/2)} W(Dz)dx\le     \int_{B(\xone,r/2)}
    (W^\qc(Du)+ \eta) dx \,.
  \end{equation*}
  Additionally,
  \begin{equation*}
    \int_B |u-z|dx \le 
    c r  \int_B (W^\qc(Du)+1)dx\,.
  \end{equation*}
  If $u$ is Lipschitz, then so is $z$.
\end{lemma}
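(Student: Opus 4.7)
The plan is to mirror the proof of Lemma~\ref{lemmaconstr2} step by step, with the submultiplicativity~(\ref{eqwgrowthproduct}) of $W$ taking over the role played by the structure condition~(\ref{eqasstheta}) on $\theta$, and with the upper bound $W\le cW^\qc+c$ from~(\ref{eqwprod2}) ensuring that $W(D\varphi_\eta)\in L^1$ whenever $\varphi_\eta$ is near-optimal in the definition of $W^\qc(F)$. Concretely, I would choose $\varphi_\eta\in W^{1,\infty}(B_{r/2};\R^n)$ with $\varphi_\eta(x)=Fx$ on $\partial B_{r/2}$ and $\strokedint_{B_{r/2}}W(D\varphi_\eta)\,dx\le W^\qc(F)+\eta$; since $\det D\varphi_\eta>0$ a.e.\ and $W(D\varphi_\eta)\in L^1$, a number $\gamma>0$ can be selected so that $\int_{B_{r/2}\cap\{\det D\varphi_\eta<\gamma\}}(1+W(F^{-1}D\varphi_\eta))\,dx$ is arbitrarily small. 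The submultiplicative bound $W(F^{-1}D\varphi_\eta)\le c(1+W(F^{-1}))(1+W(D\varphi_\eta))$ is what provides the required integrability and replaces the $\theta$-integrability argument of~(\ref{eqdefagmma}). Continuity of $W$ on $\{|\zeta|\le M_\eta,\ \det\zeta\ge\gamma\}$ and of $W^\qc$ on $\R^{n\times n}_+$ then yields an $\eps\in(0,1)$ implementing the analogues of~(\ref{eqcontwabr}) and~(\ref{eqcontwAF}), with the $\theta$-term in the latter replaced by $|W(\xi)-W(F)|\le\eta$.

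Next, I would apply Lemma~\ref{lemmachoicex1} with $\psi=F^{-1}\varphi_\eta$, $f=|Du-F|+|W(Du)-W(F)|\in L^1(B)$, and $g=1+W(D\varphi_\eta)\in L^1(B_{r/2})$, picking $\xone\in B(x_0,r/2)$ such that, for $v(x)=F^{-1}\varphi_\eta(x-\xone)+\xone$ and $B'=B(\xone,r/2)$,
\[
\strokedint_{B'}\bigl(1+W(D\widehat\varphi_\eta)\bigr)\bigl(|Du-F|+|W(Du)-W(F)|\bigr)\circ v\,dx\le c_\eta\delta.
\]
Lemma~\ref{lemmachainrule} then gives $z(x)=u(v(x))\in W^{1,1}(B';\R^n)$ with $Dz=Du\circ v\,Dv$ and with matching trace on $\partial B'$; the extension by $u$ outside $B'$ therefore lies in $W^{1,1}(B;\R^n)$, satisfies $\det Dz>0$ a.e.\ (since $\det Dv=\det(F^{-1}D\varphi_\eta)>0$), and equals $u$ on $B\setminus B'$.

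Decomposing $B'$ into the good set, $\omega=\{|Du-F|\circ v>\eps\}$, and $\omega_d=\{\det D\widehat\varphi_\eta<\gamma\}\setminus\omega$, I would split $\int_{B'}(W(Dz)-W^\qc(Du))\,dx$ as in~(\ref{eqwdzwqcduing}) and treat the three regions separately. On the good set $|Dz-D\widehat\varphi_\eta|\le\eps R_\eta$ combined with $\det D\widehat\varphi_\eta\ge\gamma$ forces $|W(Dz)-W(D\widehat\varphi_\eta)|\le\eta$. On $\omega$, submultiplicativity gives $W(Dz)\le c_F(1+W(Du\circ v))(1+W(D\widehat\varphi_\eta))$, and the elementary inequality $1+W(Du\circ v)\le c_{F,\eps}\bigl(|Du-F|+|W(Du)-W(F)|\bigr)\circ v$ on $\omega$ allows the Lemma~\ref{lemmachoicex1} estimate to absorb the contribution, which is bounded by $c_\eta\delta|B'|$. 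On $\omega_d$ we have $|Du-F|\circ v\le\eps$, hence $W(Du\circ v)\le W(F)+\eta$, reducing the contribution to $c_{F,\eta}\int_{\omega_d}(1+W(D\widehat\varphi_\eta))\,dx\le c_*\eta|B'|$ by the choice of $\gamma$. The $\int(W^\qc(F)-W^\qc(Du))\,dx$ term is controlled exactly as in Lemma~\ref{lemmaconstr2} using continuity of $W^\qc$, and the $L^1$ estimate for $u-z$ follows from Poincar\'e's inequality together with the pointwise bound $|Dz|\le c(1+W(Dz))$ from~(\ref{eqwgrowthproduct1}). The main technical obstacle is, as in the original argument, identifying the translation $\xone$ that renders the convolution-type product $f\circ v\cdot g$ integrable and small; the cleanness of the adaptation hinges entirely on~(\ref{eqwgrowthproduct}) playing precisely the combinatorial role of~(\ref{eqasstheta}) when factoring composed arguments, after which the rest of the calculation carries over almost verbatim.
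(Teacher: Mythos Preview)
Your outline is correct and follows the paper's proof essentially verbatim: the paper also picks $\varphi_\eta$, selects $\gamma$ via integrability of $W(F^{-1}D\varphi_\eta)$, applies Lemma~\ref{lemmachoicex1} with $f=|Du-F|+|W(Du)-W(F)|$, and handles the three regions $B'\setminus\omega\setminus\omega_d$, $\omega$, $\omega_d$ exactly as you describe, using the submultiplicative bound $W(Dz)\le c_W(1+W(Du)\circ v)(1+W(Dv))$ in place of~(\ref{eqwdzfehlerter1}). The only discrepancy is a small bookkeeping slip: you take the weight $g=1+W(D\varphi_\eta)$ in Lemma~\ref{lemmachoicex1} and use it on $\omega$, but you chose $\gamma$ so that $\int_{\{\det D\varphi_\eta<\gamma\}}(1+W(F^{-1}D\varphi_\eta))$ is small, whereas on $\omega_d$ you need $\int_{\omega_d}(1+W(D\widehat\varphi_\eta))$ small --- either pick $\gamma$ for the latter quantity directly, or (as the paper does) use $g=1+W(F^{-1}D\varphi_\eta)=1+W(Dv)$ throughout, which matches the natural factorization $Dz=(Du\circ v)\,Dv$ and avoids the extra application of~(\ref{eqwgrowthproduct}).
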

\begin{proof}
  This is very similar to the proof of Lemma \ref{lemmaconstr2}, we only discuss the differences. 
After (\ref{eqdefvarphieta}), (\ref{eqwgrowthproduct}) implies $W(F^{-1} D\varphi_\eta)\in L^1$ and 
equation (\ref{eqdefagmma}) is replaced by
    \begin{equation}\label{eqdefagmmaprod}
\int_{B_{r/2}\cap\{\det D\varphi_\eta<\gamma\}} (1+W(F^{-1}D\varphi_\eta))\, dx \le \frac{|B_{r/2}|}{c_W(2+W(F))} \eta\,.
  \end{equation}
  In  Lemma \ref{lemmachoicex1} we use
  $f=|Du-F|+|W(Du)-W(F)|$ and $g=1+W(F^{-1}D\varphi_\eta)$,
(\ref{eqlemmaghd}) is replaced by
  \begin{equation}\label{eqlemmaghdprod}
    \strokedint_{B'} (1+W(F^{-1}D\widehat\varphi_\eta))\,  (|Du-F|+|W(Du)-W(F)|)\circ v \, dx
\le c_{\eta}\delta \,,
  \end{equation}
where $c_{\eta}= 2^n \strokedint_{B_{r/2}} (1+W(F^{-1}D\varphi_\eta))\,dx<\infty$.
In (\ref{eqcontwAF}) we use continuity of $W$ instead of $\theta$.
The remaining differences are in the treatment of the two error sets.
We replace (\ref{eqwdzfehlerter1}) by
\begin{equation}\label{eqwdzfehlerterprod}
  W(Dz) \le c_W (1+W(Du)\circ v) (1+W(Dv))\,.
\end{equation}
We start from $\omega$. 
From $|Du-F|\circ v\ge\eps$ we deduce
\begin{alignat*}1
  1+W(Du)\circ v \le& 1+W(F)+|W(Du)-W(F)|\circ v\\
 \le &c_{F,\eps} (|Du-F|+|W(Du)-W(F)|)\circ v
\end{alignat*}
where $c_{F,\eps}=1+(1+W(F))/\eps$. 
Therefore the estimate (\ref{eqwdzfehlerterprod}) gives
\begin{alignat*}1
  \int_\omega W(Dz)dx \le & c_Wc_{F,\eps}\int_{B'}  (1+W(Dv)) \, (|Du-F|+|W(Du)-W(F)|)\circ v dx
\end{alignat*}
Recalling  (\ref{eqlemmaghdprod}),  which had been obtained by the choice of $\xone$, we get
\begin{alignat}1\nonumber
  \int_\omega W(Dz)dx\le& c_W c_{F,\eps} c_{\eta}\delta\,.
\end{alignat}
The constant depends on $\eps$ and $F$ (and hence on $\eta$) but not
  on $\delta$ and $u$.

In $\omega_d$ instead we have $|Du-F|\circ v\le \eps$. 
The continuity estimate (\ref{eqcontwAF}) gives  then $W(Du\circ v)\le W(F)+1$
and therefore
(\ref{eqwdzfehlerterprod}) reduces to 
\begin{equation*}
  W(Dz)\le c_W (2 + W(F))(1+W(Dv)) \,.
\end{equation*}
With  (\ref{eqdefagmmaprod}) we conclude
\begin{equation*}
  \int_{\omega_d} W(Dz)dx\le  |B'| \eta \,.
\end{equation*}
The conclusion is the same.
\end{proof}
\section*{Acknowledgements}
This work was partially supported by the Deutsche Forschungsgemeinschaft
through the Forschergruppe 797  {\em ``Analysis and computation of
  microstructure in finite plasticity''}, projects CO 304/4-2 (first author)
and DO 633/2-2 (second author).


\end{document}